\newtheorem{theorem}{Theorem}[section] 
\newtheorem{lemma}[theorem]{Lemma}
\newtheorem{proposition}[theorem]{Proposition}
\newtheorem{corollary}[theorem]{Corollary}
\theoremstyle{definition}
\theoremstyle{remark}
\numberwithin{equation}{section}
\title{Heron Quadrilaterals Via Elliptic Curves}
\author[F. Izadi]{Farzali Izadi} 
\address{Department of Mathematics,  Faculty of Science, Urmia University, P. O. Box 165,  Urmia 5715799313, Iran}
\email{f.izadi@urmia.ac.ir}
\author[F. Khoshnam]{Foad Khoshnam}
\address{Department of Mathematics, Azarbaijan Shahid Madani University, Tabriz 53751-71379, Iran}
\email{khoshnam@azaruniv.edu}
\author[D. Moody]{Dustin Moody}
\address{Computer Security Division, National Institute of Standards \& Technology, Gaithersburg, MD 20899-8930}
\email{dustin.moody@nist.gov}
\date{}
\keywords{Heron quadrilateral; cyclic quadrilateral; congruent numbers; elliptic curves}
\subjclass[2010]{Primary 14H52; Secondary 51M04, 14G05, 11G05}
\begin{document}

\dedicatory{}

\begin{abstract}
A Heron quadrilateral is a cyclic quadrilateral whose area and
side lengths are rational. In this work, we establish a correspondence between
Heron quadrilaterals and a family of elliptic curves of the form $y^2=x^3+\alpha x^2-n^2 x$.  This correspondence generalizes the notions of Goins and Maddox who established a similar connection between Heron triangles and elliptic curves.  
We further study this family of elliptic curves, looking at their torsion groups and ranks.  We also explore their connection with congruent numbers, which are the $\alpha=0$ case.  Congruent numbers are positive integers which are the area of a right triangle with rational side lengths.  
\end{abstract}

\maketitle


\section{Introduction}
A positive integer $n$ is a congruent number if it is equal to the area of a right triangle
with rational sides. Equivalently, $n$ is congruent if the elliptic curve $E_n : y^2=x^3-n^2 x$ has positive rank.  Congruent numbers have been intensively studied, see for example \cite{CAH}, \cite{COA}, \cite{Spe3}. The curves $E_n$ are closely connected with the problem of classifying areas of right rational triangles. Indeed, Koblitz \cite{Koblitz}, used the areas of rational triangles as a motivation for studying elliptic curves and modular forms.  In \cite{GM}, Goins and Maddox generalized  some of Koblitz's notions (\cite[Ch. 1, \S 2, ex. 3]{Koblitz}) by exploring the correspondence
between positive integers $n$ associated with arbitrary triangles (with rational side lengths) which have area $n$ and the family of elliptic
curves $y^2 = x(x - n\tau)(x + n\tau^{-1})$ for nonzero rational $\tau$. Congruent number curves are of course the $\tau=1$ case.

In this work, we extend these ideas to
show a correspondence between cyclic quadrilaterals with rational side lengths and area $n$ (Heron quadrilaterals) and a family of elliptic curves of the form $y^2=x^3+\alpha x^2-n^2 x$.  We give explicit formulas which show how to construct the elliptic curve and some non-trivial points on the curve given the side lengths and area of the quadrilateral.  If we set one of the side lengths to zero, then the formulas collapse to exactly those of Maddox and Goins.  We also show the other direction of the correspondence, that is, how to find a cyclic quadrilateral which corresponds to a given elliptic curve in our family.  We call the pair $(\alpha, n)$ a \emph{generalized congruent number pair} if the elliptic curve $y^2=x^3+\alpha x^2-n^2 x$ has a point of infinite order. We similarly call the curve a generalized congruent number elliptic curve.  The generalized congruent number curves with $\alpha=0$ are precisely the congruent number curves.  Stated in this way, our results relate generalized congruent number pairs with cyclic quadrilaterals with area $n$.

We also study the family of curves defined by the generalized congruent number pairs, looking at their torsion groups and ranks.  The torsion groups are usually ${\mathcal T}=\mathbb Z/2\mathbb Z$, although in some cases it is $\mathbb Z/2\mathbb Z\times \mathbb Z/2\mathbb Z, \mathbb Z/2\mathbb Z\times \mathbb Z/4\mathbb Z$ or $\mathbb Z/6\mathbb Z$.  Studying families of elliptic curves with torsion group $\mathbb{Z}/2\mathbb{Z}$ with high rank has been of much interest \cite{ARC},\cite{Cam},\cite{Kr},\cite{Kr2},\cite{Nag}. The highest known rank for a curve with ${\mathcal T}=\mathbb Z/2\mathbb Z$ is 19, due to Elkies \cite{Duj3}.  Fermigier found infinite families with rank at least 8 \cite{F1,F2,F3}.

Any elliptic curve with a 2-torsion point may be written in the form $E_{\alpha,\beta}:y^2=x^3+\alpha x^2+\beta x$.
Special cases of the family of the curves $E_{0,\beta}: y^2=x^3+\beta x$, and their ranks have been studied by many authors including Bremner and Cassels \cite{Br}, Kudo and Motose \cite{Ko}, Maenishi \cite{Ma}, Ono and Ono \cite{On}, Izadi, Khoshnam and Nabardi \cite{IK}, Aguirre and Peral \cite{AP}, Spearman \cite{Spe,Spe2}, and Hollier, Spearman and Yang \cite{Ho}. The general case was studied by Aguirre, Castaneda, and Peral \cite{Ag} and they found curves of rank 12 and 13.  See \cite{Duj3,Duj4} for tables with the highest known ranks for other fixed torsion groups, including references to the papers where each curve can be found.

In this work, the curves we study are of the form $E_{\alpha,-n^2}$.  We believe this is the first time in the literature curves of this form have been examined.  We find many such curves with rank (at least) 10.  We also construct an infinite family of the $E_{\alpha,-n^2}$ with rank at least 5.  All of these curves arise from cyclic quadrilaterals.  Furthermore, in the special case with $\alpha=0$, we find infinite families of congruent number curves with ranks 2 and 3, matching the results of \cite{Rub}, \cite{Wat}, \cite{Spe3}.

This work is organized as follows.  In section 2 we review basic facts about cyclic quadrilaterals.  Section 3 details the correspondence between cyclic quadrilaterals and the elliptic curves, and includes our main result.  We examine the torsion groups of the family of elliptic curves we study in Section 4.  In sections 5 and 6 we find examples of congruent number curves with high rank, as well as high rank curves from the family $E_{\alpha,-n^2}$.  We conclude with some examples and data in Section 7.

\section{Cyclic quadrilaterals}
A cyclic polygon is one with vertices upon which a circle can be circumscribed. Specifically, we will focus on cyclic quadrilaterals.
Mathematicians have long been interested in  cyclic quadrilaterals. For example, consider Kummer's complex construction to generate
Heron quadrilaterals outlined in \cite{Dic}.
The existence and parametrization of quadrilaterals with rational side lengths (and additional conditions) has a long history \cite{A-N,Dai,Dic,Gup,I-V}.
Buchholz and Macdougall \cite{Buc-Mac1} have shown that there exist no nontrivial
Heron quadrilaterals having the property that the rational side lengths form an arithmetic or geometric progression.   In \cite{MK}, (cyclic) Brahmagupta quadrilaterals were used to construct infinite families of elliptic curves with torsion group $\mathbb Z/2\mathbb Z\times \mathbb Z/2\mathbb Z$
having ranks 4, 5, and 6.

A convex quadrilateral is cyclic if and only if its opposite angles are supplementary.  An example of a quadrilateral which is not cyclic is a non-square rhombus.  Another characterization of cyclic quadrilaterals can be given by Ptolemy's theorem:  if the diagonals have lengths $p,q$, then a convex quadrilateral is cyclic if and only if $pq=ac+bd$.  Given four side lengths such that the sum of any three sides is greater than the remaining side, there exists a cyclic quadrilateral with these side lengths \cite{CMG},\cite{Pet}.  The area of a cyclic quadrilateral with side lengths $a,b,c,d$ can be found using Brahmagupta's formula
$$\sqrt{(s-a)(s-b)(s-c)(s-d)},$$
where $s=(a+b+c+d)/2$.  Letting $d=0$, the formula collapses to Heron's formula for the area of a triangle.  It is known that a cyclic quadrilateral has maximal area among all quadrilaterals with the same side lengths.

%

Assume we have a cyclic quadrilateral whose consecutive sides have lengths $a,b,c,$ and $d$, with rational area $n$.  Let $\theta$ be the angle between the sides with lengths $a$ and $b$.  Then  by considering the Law of Cosines and the area formula, we have
\begin{equation} \label{loc} \cos \theta =\frac{a^2+b^2-c^2-d^2}{2(ab+cd)}\qquad \mbox{and} \qquad \sin \theta=\frac{2n}{ab+cd}. \end{equation}

\section{Cyclic quadrilaterals and elliptic curves}
In this section we establish a correspondence between cyclic quadrilaterals whose area and side lengths are rational and elliptic curves. In \cite{GM}, the authors created a similar correspondence between triangles with rational area and elliptic curves, which in some sense were generalizations of congruent number curves.  We follow the same initial approach.

We use the notation from the previous section.  From equation \eqref{loc}, we see that both $\cos \theta$ and $\sin \theta$ are rational.  Set $\tau$ to be
$$\tau=\tan \frac{\theta}{2}=\frac{\sin \theta}{1+\cos \theta}=\frac{4n}{(a+b)^2-(c-d)^2}.$$
Note that
$$\tau+\tau^{-1}=\frac{ab+cd}{n}$$
and
$$\tau-\tau^{-1}=-\frac{a^2+b^2-c^2-d^2}{2n}.$$
From \eqref{loc}, consider that
$$a^2-2ab \cos \theta +b^2 =c^2+2cd \cos \theta +d^2,$$
so then
$$(a-b \cos \theta)^2+(b^2-d^2)\sin^2 \theta=(c+d \cos \theta)^2.$$
Thus, if we set 
$u=a-b\cos \theta$, $v=b\sin \theta$, and $w=c+d\cos \theta$,
then 
$$u^2+(1-d^2/b^2)v^2=w^2.$$  
Hence we know there exists a $t$ such that
$$u=(1+d/b)t^2-(1-d/b),$$ 
$$v=2t,$$
$$w=(1+d/b)t^2+(1-d/b),$$
in terms of
$$t=\frac{b}{b+d}\frac{u+w}{v}=\frac{b}{b+d}\frac{a+c-(b-d)\cos\theta}{b\sin\theta}$$
$$=\frac{(a+c)^2-(b-d)^2}{4n}.$$


Set
$$x_1=nt=\frac{(a+c)^2-(b-d)^2}{4},$$
$$y_1=ax_1=a\frac{(a+c)^2-(b-d)^2}{4},$$
The point $P_1=(x_1,y_1)$ is on the curve $y^2=x^3+\alpha x^2+\beta x$ where

$$\alpha=\frac{2n}{\tan\theta}+d^2=\frac{a^2+b^2-c^2+d^2}{2} \qquad \mbox{and} \qquad \beta=-n^2.$$
We denote this defined  cubic equation by $E_{\alpha,-n^2}.$  The discriminant of the curve is
$$\Delta(E_{\alpha,-n^2})=n^4(a^2b^2+a^2d^2+b^2d^2+2abcd)$$
and is nonzero because $a,b,c$ and $d$ are positive. Hence, the cubic does indeed define a nonsingular curve.
A point $P=(x,y)$ has order 2 if and only if $y=0$; hence as $n\neq 0$ then $y_1=(a/4)(a+b+c-d)(a-b+c+d) \neq 0$ and so $P_1$ does not have order 2.
This construction generalizes Goins and Maddox's technique since setting $d=0$, the formulas for $\tau, t, \alpha, \beta, \sin \theta, \cos \theta, (x_1,y_1)$ obtained are exactly those in \cite{GM}.

We can easily find other points on the elliptic curve $E_{\alpha,-n^2}.$  Namely, let
$$P_2=(x_2,y_2)=\left(-\frac{(a+d)^2-(b-c)^2}{4},b\frac{(a+d)^2-(b-c)^2}{4}\right),$$
$$P_3=(x_3,y_3)=\left(-\frac{(a+b)^2-(c-d)^2}{4},d\frac{(a+b)^2-(c-d)^2}{4}\right).$$

Note that $a=y_1/x_1, b=-y_2/x_2, d=-y_3/x_3$.  It can be checked that $(x_1,y_1)+(x_2,y_2)+(x_3,y_3)=\infty$.  Furthermore, using the height pairing matrix it can be checked that any two of the set of points $(x_1,y_1), (x_2,y_2), (x_3,y_3)$ are linearly independent.  So the rank of $E$ is generically at least 2.

Using the addition law, we also have
\begin{equation} \label{addzero} (x,y)+(0,0)=\left(\frac{-n^2}{x},\frac{n^2y}{x^2} \right),\end{equation}
\begin{equation} \label{double} [2](x,y)=\left(\frac{(x^2+n^2)^2}{4y^2},\frac{(x^2+n^2)(x^4+2\alpha x^3-6n^2x^2-2\alpha n^2 x+n^4)}{8y^3} \right).\end{equation}
In particular
$$x_{2P_1}=\frac{(ac+bd)^2}{4a^2} ,$$
$$x_{2P_2}=\frac{(ad+bc)^2}{4b^2} ,$$
$$x_{2P_3}=\frac{(ab+cd)^2}{4d^2} ,$$
from which we can derive the following
\begin{equation}\label{x1} \frac{x_1^2+n^2}{x_1}=ac+bd,\end{equation}
\begin{equation}\label{x2} \frac{x_2^2+n^2}{x_2}={-(}ad+bc{)}, \end{equation}
$$\frac{x_3^2+n^2}{x_3}={-(}ab+cd{)}.$$
Obviously, using the above quantities, we could solve for $c, \cos \theta, \sin \theta, \tau$, etc.  In particular
$$\sin A=\frac{2n}{ab+cd}=-\frac{2nx_3}{x_3^2+n^2},$$
$$\sin B=\frac{2n}{ad+bc}=-\frac{2nx_2}{x_2^2+n^2},$$
and
$$\cos A=\frac{a^2+b^2-c^2-d^2}{2(ab+cd)}=\frac{x_3(d^2-\alpha)}{x_3^2+n^2}=\frac{x_3^2-n^2}{x_3^2+n^2},$$
$$\cos B=\frac{-a^2+b^2+c^2-d^2}{2(ad+bc)}=\frac{x_2(\alpha-b^2)}{x_2^2+n^2}=\frac{n^2-x_2^2}{x_2^2+n^2}.$$

The correspondence we have illustrated is the main result of this work.  We note that for every rational value $n$, there are many cyclic quadrilaterals with area $n$.  For example, any rectangle with side lengths $k$ and $n/k$.  Still, we find the above correspondence very interesting.

\begin{theorem}
\label{main}
For every cyclic quadrilateral with rational side lengths and area $n$, there is an elliptic curve
$$E_{\alpha,-n^2}:y^2=x^3+\alpha x^2-n^2x$$
with 2 rational points, neither of which has order 2.  Conversely, given an elliptic curve $E_{\alpha,-n^2}$ with positive rank, then there is a cyclic quadrilateral with area $n$ whose side lenghts are rational (under the correspondence given above).
\end{theorem}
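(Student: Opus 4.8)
The forward direction is essentially a verification of the construction preceding the statement. Given a cyclic quadrilateral with rational sides $a,b,c,d$ and rational area $n$, I would set $\alpha=(a^2+b^2-c^2+d^2)/2\in\mathbb Q$ and take the two points $P_1=(x_1,ax_1)$ and $P_2=(x_2,bx_2)$. What must be checked is that each lies on $E_{\alpha,-n^2}$ and that neither is $2$-torsion. Dividing $y^2=x^3+\alpha x^2-n^2x$ by $x_i$, membership on the curve is equivalent to the single identity $n^2=x_1^2+(\alpha-a^2)x_1$ (and its analogue for $x_2$); I would verify that this identity is exactly Brahmagupta's area formula rewritten, substituting the already-derived expressions for $\cos\theta,\sin\theta,\tau,t$. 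A point has order $2$ iff its $y$-coordinate vanishes, and factoring gives $y_1=\tfrac a4(a+b+c-d)(a-b+c+d)$ and $y_2=-\tfrac b4(a+b-c+d)(a-b+c+d)$; the hypothesis that the sum of any three sides exceeds the fourth forces $y_1,y_2\neq0$, while $x_1,x_2\neq0$ rules out the point $(0,0)$. (The height-pairing remark moreover shows $P_1,P_2$ are independent, so the rank is at least $2$, but the statement only requires two non-$2$-torsion points.)

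For the converse I would reverse the construction. Positive rank provides a point $P=(x,y)$ of infinite order; since the $2$-torsion is cut out by $y=0$ and the only other rational point above $x=0$ is $(0,0)$, we have $x,y\neq0$ and may put $a=y/x\in\mathbb Q^\times$. The decisive observation is that, because $P$ satisfies the curve equation, the relation $n^2=x^2+(\alpha-a^2)x$ holds \emph{automatically}. Consequently it suffices to find rational $b,c,d$ obeying only the two side-relations $(a+c)^2-(b-d)^2=4x$ and $a^2+b^2-c^2+d^2=2\alpha$: for any such triple, Brahmagupta's formula then returns area $n$ with no further condition imposed.

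To solve these two relations I would parametrize the first by factoring $((a+c)-(b-d))((a+c)+(b-d))=4x$ and introducing a free parameter $\lambda$, which yields $c=\lambda+x/\lambda-a$ and $b-d=\lambda-x/\lambda$. Substituting into the second relation and setting $s=b+d$ reduces everything to the requirement that $s^2=c^2-2ac-3a^2+4\alpha+4x$ be a rational square; clearing denominators, this is the demand that an $x$-palindromic quartic $q(\lambda)$ be a square, i.e.\ that the genus-one curve $w^2=q(\lambda)$ (whose Jacobian one computes to be $E_{\alpha,-n^2}$ itself) carry a rational point with $\lambda\neq0$. This is exactly where positive rank enters: it furnishes infinitely many such points, equivalently infinitely many admissible choices of the second point $P_2$, and hence rational $b,c,d$.

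The main obstacle is the final step: upgrading a rational solution to a \emph{bona fide} quadrilateral. One must choose the signs of the two square roots and the value of $\lambda$ so that $a,b,c,d$ are all positive and satisfy the quadrilateral inequality. A short computation gives $s^2-(b-d)^2=4(\alpha+2x-a^2-ac)$, so positivity of $b=\tfrac12(s+(b-d))$ and $d=\tfrac12(s-(b-d))$ amounts to the open conditions $0<c<(\alpha+2x-a^2)/a$ together with $(a+c)^2\geq4x$. I expect to close the argument by density: since the curve has positive rank, its rational points are dense in the identity component of the real locus, so the attainable values of $c$ are dense in an interval, and one checks that this interval meets the nonempty open range above. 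Verifying that this range is nonempty and that the four triangle-type inequalities hold simultaneously for the resulting sides is the delicate heart of the proof.
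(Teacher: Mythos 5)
Your overall strategy is the same as the paper's: the forward direction is the verification you describe (the paper likewise notes $y_1=(a/4)(a+b+c-d)(a-b+c+d)\neq 0$ by the quadrilateral inequality), and for the converse the paper also fixes $a=y_1/x_1$ from a point of infinite order, reduces the remaining side lengths to the condition that a quartic be a rational square, identifies that quartic with $E_{\alpha,-n^2}$, and finishes with a density argument for positivity. Your observation that the Brahmagupta area condition follows \emph{automatically} from the curve equation once $(a+c)^2-(b-d)^2=4x$ and $a^2+b^2-c^2+d^2=2\alpha$ hold is correct and is a genuine streamlining: the paper imposes three constraints (including $ac+bd=(x_1^2+n^2)/x_1$), of which the third is equivalent to your first given the other data. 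One small omission: you must normalize $P$ (replacing it by $-P$ or $P+(0,0)$) so that $x,y>0$; otherwise $a=y/x$ may be negative.

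There are, however, two genuine gaps at exactly the points where the real work lies. First, ``the Jacobian of $w^2=q(\lambda)$ is $E_{\alpha,-n^2}$'' does not by itself let positive rank of $E$ produce rational points on the quartic: a genus-one curve with Jacobian $E$ is only a torsor and may have no rational points at all. You need the quartic to be \emph{birationally equivalent} to $E$, which requires exhibiting a rational point on it and writing down the maps; the paper does precisely this (its quartic $C(b,z)$ carries the visible point $(-a,\zeta)$, and Lemma~3.2 gives explicit transformations $f_1,f_2,f_3$ landing on $y^2=x^3+\alpha x^2-n^2x$, using the curve equation to show the coefficient collapses to $-4n^2$). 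Second, you explicitly defer the positivity of $b,c,d$ and the nonemptiness of the admissible interval, calling it ``the delicate heart of the proof'' --- but that heart is never supplied. In the paper this step is not a formality: one must prove $\zeta^2-a^4+2a^2\alpha>0$ (which again uses the curve equation), show the real locus of the quartic is connected and meets the $z$-axis between the line $z=(\zeta/a)b$ and the hyperbola $z=a\zeta/b$, and then invoke density of rational points on a positive-rank curve to choose $b$ in a sufficiently small interval forcing $c,d>0$. Without an argument of this kind your proof establishes rational solutions of the side-length equations but not an actual cyclic quadrilateral. (By contrast, the four ``triangle-type'' inequalities are not an additional burden: once $a,b,c,d>0$ and the Brahmagupta product equals $n^2>0$, at most one factor $s-a,\dots,s-d$ could be negative, so all four are positive automatically.)
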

\begin{proof}
Given a cyclic quadrilateral, the above construction shows how to construct $E_{\alpha,-n^2}$, with points $P_1$ and $P_2$.  The points $P_i$ are of order 2 if and only if $y_{P_i}=0$, which implies $x_{P_i}=0$ as well, since for example $y_1=ax_1$.  However the $x$-coordinates are all non-zero, since in any quadrilateral (cyclic or not) the largest side length is less than the sum of the other three sides.  This shows one direction.

For the converse, we fix a point $P_1=(x_1,y_1)$ of infinite order (which exists since $E_{\alpha,-n^2}$ has positive rank).  We may replace $P_1$ by $-P_1$ or $P_1+(0,0)$ (see \eqref{addzero}) so that we can take $x_1, y_1>0$.  Set $a=y_1/x_1$.  Then consider the three equations:
\begin{equation} \label{cst1} \frac{a^2+b^2-c^2+d^2}{2}=\alpha,\end{equation}
\begin{equation} \label{cst2} \frac{1}{16}(a-b-c-d)(a-b+c+d)(a+b-c+d)(a+b+c-d)=-n^2,\end{equation}
$$ac+bd=\frac{x_1^2+n^2}{x_1}.$$
We set $\zeta=\frac{x_1^2+n^2}{x_1}$, which can be easily shown to be equal to $a^2-\alpha +2n^2/x_1.$
Let
$$h(x):=(b^2-a^2)x^2-2\zeta bx+(2\alpha a^2+\zeta^2-a^4-a^2b^2),$$
and let $r$ be a root of $h(x)=0$.  It can be checked that a solution to the three equations above is given by
$c=(-br+\zeta)/a,$ and $d=r.$  In order for $d=r$ to be rational, the discriminant of $h$ must be a square, equivalently
$$C(b,z):b^4-2\alpha b^2 +(\zeta^2-a^4+2a^2 \alpha)=z^2.$$
We can then express
$$c=\frac{a\zeta\pm bz}{a^2-b^2},$$
$$d=-\frac{b\zeta\pm az}{a^2-b^2}.$$
This quartic curve $C(b,z)$ is actually birationally equivalent to $y^2=x^3+\alpha x^2-n^2 x$.

\begin{lemma}
The curve $C(b,z)$ is birationally isomorphic to the curve $E_{\alpha,-n^2}:y^2=x^3+\alpha x^2-n^2x$.
\end{lemma}
\begin{proof}
Note the curve $C(b,z)$ has rational point $(-a,\zeta)$.  
Under the transformation $f_1(b,z) \to (b+a,z)$, we map to the curve
$$C_1(b,z):b^4-4ab^3+(6a^2-2\alpha)b^2+4a(\alpha -a^2)b+\zeta^2,$$
with rational point $(0,\zeta)$.
We now map to a Weierstrass curve by
$$
\begin{aligned}
f_2(b,z)=(x,y)=\Big (&-\frac{2}{3b^2}((3a^2-\alpha)b^2+6a(\alpha-a^2)b+3\zeta(z+\zeta)),
\\ &\frac{4}{b^3}(-a\zeta b^3+\zeta(3a^2-\alpha)b^2+a(z+3\zeta)(\alpha-a^2)b+\zeta^2(z+\zeta)\Big ),
\end{aligned}
$$
$$C_2(x,y):y^2=x^3+(-4/3 \alpha^2 -8\alpha a^2+4a^4-4\zeta^2)x-16/27\alpha(\alpha^2-18\alpha a^2+9a^4-9\zeta^2).$$
We now perform a simple linear change of variables $f_3(x,y)=(\frac{1}{4}(x-4\alpha/3),\frac{1}{8}y)$, sending the curve to
$$C_3(x,y):y^2=x^3+\alpha x^2+\frac{1}{4}(\alpha^2-2a^2 \alpha+a^4-\zeta^2)x.$$
We compute
$$\begin{aligned}
\alpha^2-2a^2 \alpha+a^4-\zeta^2&=\alpha^2-2a^2\alpha+a^4-\Big(a^2-\alpha+2\frac{n^2}{x_1}\Big)^2  \\
&=4n^2\frac{-a^2x_1+\alpha x_1-n^2}{x_1^2}\\
&=4n^2\frac{-y_1^2+\alpha x_1^2-n^2x_1}{x_1^3}\\
&=-4n^2.
\end{aligned}$$
Thus, $C_3(x,y)$ is really just $y^2=x^3+\alpha x^2-n^2x$.  Composing the maps $f_1, f_2,$ and $f_3$, we see the curves are birationally equivalent.
\end{proof}

\noindent We continue the proof of Theorem \ref{main}. Since $E_{\alpha,-n^2}$ has positive rank, then so does $C(b,z)$.  In other words, there are infinitely many rational points $(b,z)$ on $C(b,z)$.
Given any rational values for $(b,z)$, we can then compute $c$ and $d$.  It remains to check that $c$ and $d$ are positive.

We have an infinite number of choices for $b$.  We will pick a ``small" $b$ so that the quantity $a^2-b^2$ will be positive.  We then want $a\zeta-bz>0$ and $-b\zeta+az>0$, which will guarantee $c,d>0$.
Note that when $b=a$, both the line $z=(\zeta/a)b$ and the hyperbola $z=(a\zeta)/b$ intersect the curve $C(b,z)$ at the same point $(a,\zeta)$.  Looking in the first quadrant (i.e., where $b,z>0$), the line and hyperbola can possibly intersect $C(b,z)$ additional times.
The line will intersect $C(b,z)$ if $b^2=(\zeta^2+2a^2\alpha-a^4)/a^2$.  The hyperbola will intersect $C(b,z)$ if $b^4+(a^2-2\alpha)b^2+\zeta^2=0$.  Let $r^*$ be the minimum positive $b$-value of any intersections of $C(b,z)$ with either the line or hyperbola.  Also observe that for positive $b$ near $b=0$, the value of the hyperbola $z=(a\zeta)/b$ goes to infinity.

We claim the quartic curve $C(b,z)$ intersects the $z$-axis.  We must check that $(0, \sqrt{\zeta^2-a^4+2a^2\alpha})$ is a real point on $C(b,z)$, i.e., that $\zeta^2-a^4+2a^2\alpha>0$.  Equivalently, this is $\alpha^2 x_1^2+4n^2(n^2+a^2 x_1-\alpha x_1))>0$.  Now, since $a^2x_1^2=x_1^3+\alpha x_1^2-n^2 x_1$ then $x_1^2=-\alpha x_1+n^2+a^2 x_1$.  Substituting this in to the previous equation we have $\alpha^2 x_1^2+4n^2 x_1^2$, and thus $\zeta^2-a^4+2a^2\alpha > 0$.

We now utilize the above analysis to show how to choose a ``small" $b$.  Since the curve $C(b,z)$ has positive rank, then it has an infinite number of rational points.  The curve is obviously symmetric about the $b$-axis, and it is easy to check that it does not intersect the $b$-axis.  Thus the number of connected components (over $\mathbb{R}$) is one.   In particular, we can conclude there are an infinite number of rational points with $z>0$.  By the density of rational points on positive rank curves (see Th. 5 Ch. 11 of \cite{Sk}), we can choose a rational $b_0$ with $0<b_0<\epsilon <a$ (for any $\epsilon< \mbox{min}\{a, \sqrt{\zeta^2+2a^2\alpha-a^4}/a , r^*\}$) yielding a rational point on $C(b_0,z_0)$.  As seen in the analysis above, the point $(0, \sqrt{\zeta^2-a^4+2a^2\alpha})$  lies beneath the hyperbola $z=(a\zeta)/b$ and above the line $z=(\zeta/a) b$, hence the same is true for $(b_0,z_0)$.  Thus with this choice of $(b_0,z_0)$ (for small enough $\epsilon$), we see that both $c$ and $d$ are positive.

In fact, this argument shows we have an infinite number of possibilities for positive $b,c,$ and $d$.  The cyclic quadrilateral with side lengths $(a,b,c,d)$ will then correspond to $E_{\alpha,-n^2}$ since the equations \eqref{cst1} and \eqref{cst2} are satisfied.  This completes the proof.
\end{proof}

We remark that while the proof showing the existence of a cyclic quadrilateral is not completely constructive (since we require $c,d>0$), in practice it is not hard to produce the cyclic quadrilaterals.  Start with two points $P_1$ and $P_2, $ which are not of order 2 (and whose sum is $P_1+P_2$ is also not of order 2).  Write $P_i=(x_i,y_i)$, and set $a=y_1/x_1$, $b=-y_2/x_2$.
Then set $P_3=-P_1-P_2=(x_3,y_3)$, and $d=-y_3/x_3$.  By assumption, $P_3$ is not 2-torsion, and hence $y_3 \neq 0$, so $d \neq 0$.   By replacing $P_i$ by $P_i+(0,0)$ or $-P_i$, we can assume $x_1>0, y_1>0$, $x_2<0, y_2>0$, and $x_3<0, y_3>0$ so that $a,b,$ and $d$ are positive.
Then compute $c=(x_1+n^2/x_1-bd)/a$.
If we assume the rank of $E_{\alpha,-n^2}$ is positive, then we will have an infinite number of choices for $P_1$ and $P_2$.  From numerical experiments, we have observed that $c$, as derived above, is usually positive. However, in the case $c$ is negative, we can simply replace $P_1$ and/or $P_2$ until $c>0$.
%
%

If neither of the points $P_1,P_2$ has infinite order, then the cyclic quadrilateral must be of a special form.

\begin{theorem}
\label{rank0}
If the curve $E_{\alpha,-n^2}$ arising from a cyclic quadrilateral has rank 0 then the associated quadrilateral is either a square, or an isosceles trapezoid with three sides equal ($a=b=d$) such that $(a+c)(3a-c)$ is a square.   The torsion group is $\mathbb{Z}/6\mathbb{Z}$ for this rank 0 case.
\end{theorem}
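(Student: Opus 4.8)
The plan is to exploit the fact that rank $0$ forces all three points $P_1,P_2,P_3$ produced by the construction to be torsion, and then to read off the geometric constraints directly from the group law inside the (necessarily small) torsion subgroup.

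First I would note that since $E_{\alpha,-n^2}$ has rank $0$, every rational point is torsion; in particular $P_1,P_2,P_3\in E(\mathbb{Q})_{\mathrm{tors}}$. None of them is $2$-torsion, since $y_1=ax_1$, $y_2=-bx_2$, $y_3=-dx_3$ are all nonzero (the side lengths are positive and each $x_i\neq 0$ because the largest side of a quadrilateral is strictly less than the sum of the other three). Hence the torsion group contains a point of order $\geq 3$. By the classification of torsion subgroups for this family (Section 4), $E(\mathbb{Q})_{\mathrm{tors}}$ is one of $\mathbb{Z}/2\mathbb{Z}$, $\mathbb{Z}/2\mathbb{Z}\times\mathbb{Z}/2\mathbb{Z}$, $\mathbb{Z}/2\mathbb{Z}\times\mathbb{Z}/4\mathbb{Z}$, or $\mathbb{Z}/6\mathbb{Z}$. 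The first two contain no element of order $>2$ and are excluded at once. For $\mathbb{Z}/2\mathbb{Z}\times\mathbb{Z}/4\mathbb{Z}$, every element of order $>2$ has odd $\mathbb{Z}/4\mathbb{Z}$-component, so the sum of any three of them has odd, hence nonzero, $\mathbb{Z}/4\mathbb{Z}$-component and cannot equal the identity; but $P_1+P_2+P_3=\infty$. This leaves $E(\mathbb{Q})_{\mathrm{tors}}=\mathbb{Z}/6\mathbb{Z}$, which establishes the torsion claim.

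Next I would pin down exactly which points the $P_i$ are. In $\mathbb{Z}/6\mathbb{Z}$ the non-$2$-torsion points split into two reflection pairs (of orders $6$ and $3$), so exactly two of them have $y>0$; call these $Q_6$ (order $6$) and $Q_3$ (order $3$), and note their $x$-coordinates are nonzero and distinct (equality would force $Q_6=\pm Q_3$, impossible for distinct orders). All of $P_1,P_2,P_3$ have $y>0$, while the explicit formulas together with the quadrilateral inequalities force $x_1>0>x_2,x_3$. Since only two distinct $x$-values are available, one of $Q_6,Q_3$ has positive and the other negative $x$-coordinate. Testing the two options against $P_1+P_2+P_3=\infty$ and the orders, the configuration with $x(Q_6)>0$ fails (it would require $Q_6=-2Q_3$, an order-$3$ element), so the only consistent possibility is $P_1=Q_3$ with $x>0$ and $P_2=P_3=Q_6$ with $x<0$, satisfying $P_1=-2Q_6$. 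In particular $P_2=P_3$, and since $b=-y_2/x_2$, $d=-y_3/x_3$, this gives $b=d$.

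Finally I would extract the side relations. Because $P_1$ has order $3$, we have $2P_1=-P_1$ and hence $x_{2P_1}=x_1$; substituting $x_{2P_1}=(ac+bd)^2/(4a^2)$ and $x_1=((a+c)^2-(b-d)^2)/4$ with $b=d$ yields $(ac+b^2)^2=a^2(a+c)^2$, whose only solution in positive reals is $a=b$, so that $a=b=d$. Brahmagupta's formula with $a=b=d$ then gives $n=\frac{a+c}{4}\sqrt{(a+c)(3a-c)}$, so rationality of $n$ forces $(a+c)(3a-c)$ to be a perfect square; the three equal sides make the quadrilateral an isosceles trapezoid, degenerating to a square precisely when $c=a$. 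I expect the main obstacle to be the sign and position bookkeeping of the third step, namely correctly matching the abstract elements of $\mathbb{Z}/6\mathbb{Z}$ to the geometrically constrained points $P_i$ and eliminating the alternative configuration, rather than the closing algebraic identities, which are routine.
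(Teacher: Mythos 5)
Your proof is correct and reaches the same conclusion, but several of your key steps differ from the paper's and are arguably cleaner. To rule out $\mathbb{Z}/2\mathbb{Z}\times\mathbb{Z}/4\mathbb{Z}$, the paper fixes $2P_1=T_1$ and runs a four-way case analysis on which of $\{P_1,-P_1,P_1+(0,0),-P_1+(0,0)\}$ the point $P_2$ can be, deriving contradictions such as $a=-b$ or $P_3=\infty$; your observation that all three $P_i$ have order $4$, hence odd $\mathbb{Z}/4\mathbb{Z}$-component, so that $P_1+P_2+P_3$ cannot vanish, collapses this to a one-line parity argument (it does require noting that $P_2$ and $P_3$, not just $P_1$, are non-$2$-torsion, which you justify via the quadrilateral inequalities). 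In the $\mathbb{Z}/6\mathbb{Z}$ case the paper first gets $a=b$ by observing that $y/x=\pm a$ on every non-$2$-torsion point and that $P_2=(x,-bx)$ must be among them, and then compares the $x$-coordinate of $2Q$ (via the duplication formula) with that of $-P_1$ to force $b=d$; you instead use the sign data $y_i>0$, $x_1>0>x_2,x_3$ to conclude $P_2=P_3$ outright (giving $b=d$ first), eliminate the order assignment $P_1=Q_6$ by an order count, and then extract $a=b$ from $x_{2P_1}=x_{P_1}$ together with the identity $(x_1^2+n^2)/x_1=ac+bd$. Both routes lean on the Section~4 torsion classification and the explicit formulas for $P_1,P_2,P_3$; yours buys a shorter exclusion of $\mathbb{Z}/2\mathbb{Z}\times\mathbb{Z}/4\mathbb{Z}$ and a more geometric identification of the points, while the paper's ratio argument $y/x=\pm a$ is a reusable trick it also deploys elsewhere. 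The closing steps (Brahmagupta with $a=b=d$ giving rationality of the area iff $(a+c)(3a-c)$ is a square, and the trapezoid/square dichotomy according to whether $a=c$) agree with the paper.
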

\begin{proof}
We will show in the next section that the torsion group is $\mathbb{Z}/2\mathbb{Z}$, $\mathbb{Z}/2\mathbb{Z} \times \mathbb{Z}/2\mathbb{Z}$,  $\mathbb{Z}/2\mathbb{Z} \times \mathbb{Z}/4\mathbb{Z}$, or $\mathbb{Z}/6\mathbb{Z}$.  We already showed that $P_1$ does not have order 2, hence the torsion group must be either $\mathbb{Z}/6\mathbb{Z}$  or $\mathbb{Z}/2\mathbb{Z} \times \mathbb{Z}/4\mathbb{Z}$.  We first handle the case of $T=\mathbb{Z}/2\mathbb{Z} \times \mathbb{Z}/4\mathbb{Z}$, showing that when the rank is 0, it cannot have come from a cyclic quadrilateral.

Let the three points of order 2 be denoted $(0,0)$, $T_1$, and $T_2$.  The point $P_1=(x_1,y_1)$ is not a point of order 2, and hence must be a point of order 4.  By Lemma 4.1, $2P_1\neq (0,0)$, and without loss of generality we may take $2P_1=T_1$.  Then the four points which are not of order 2 are $\{P_1, -P_1, P_1+(0,0), -P_1+(0,0)\}$, and it must be that $P_2=(x_2,y_2)$ is one of these points.  If $P_2=P_1$, then $a=y_1/x_1=y_2/x_2=-b$, a contradiction as $a,b>0$.  Similarly, if $P_2=-P_1+(0,0)$ then we again end up with $a=-b$, a contradiction.  If $P_2=P_1+(0,0)$ then $P_3=-(P_1+P_2)=-2P_1+(0,0)=T_1+(0,0)=T_2$, however $P_3=(x_3,y_3)$ is not a point of order 2 since $y_3 \neq 0$.  We can therefore conclude that $P_2=-P_1$, but this is likewise a contradiction because then $P_3=-P_1-P_2=\infty$.  Thus the torsion group for a rank 0 curve $E_{\alpha,-n^2}$ arising from a cyclic quadrilateral cannot be $\mathbb{Z}/2\mathbb{Z} \times \mathbb{Z}/4\mathbb{Z}$.

So we can assume the torsion group is $\mathbb{Z}/6\mathbb{Z}$.
If $P_1$ has order 6, then the set of all rational points of $E$ is $\{P_1,2P_1,3P_1=(0,0),4P_1=P_1+(0,0),5P_1=-P_1,\infty  \}$.  Then
$$P_1+(0,0)=\left(\frac{(a-b-c-d)(a+b-c+d)}{4},-a(a-b-c-d)(a+b-c+d)  \right).$$
As $2P_1=-4P_1$, then
$$2P_1=\left(\frac{(a-b-c-d)(a+b-c+d)}{4},a(a-b-c-d)(a+b-c+d)  \right).$$
Now notice that the ratio of  $y_k/x_k$ for $kP_1=(x_k,y_k)$ (with $k=1,2,4,5$) is either $a$ or $-a$.  But we have $P_2=(x,-bx)$, which must be one of the points $P_1,2P_1, 4P_1, 5P_1$, and hence we must have $a=b$ (since $a,b>0$).  Using the doubling formula, we have that the $x$-coordinate of $2P_1$ is $\frac{b^2(c+d)^2}{4b^2}$, (as $a=b$) which must equal the $x$-coordinate of $P_1+(0,0)$ which is $(-1/4)(c+d)(2b-c+d)$.  Equating these two $x$-coordinates requires $(c+d)(b+d)/2=0$, a contradiction.  Thus, if the rank is 0 then $P_1$ cannot have order 6.

The only other possibility is that $P_1$ has order 3.  Then necessarily $Q=P_1+(0,0)$ has order 6.  The set of all rational points must be $\{Q=P_1+(0,0),2Q=-P_1,(0,0),4Q=P_1,5Q=-P_1+(0,0),\infty \}$.  Similarly as above, the ratio $y/x$ of the points not equal to $(0,0)$ or $\infty$ is equal to $\pm a$.  Considering $P_2$, we must have $b=a$.  This means that $Q=\left((-1/4)(c+d)(2b-c+d),(b/4)(c+d)(2b-c+d)   \right)$.  Using the doubling formula for $2Q$, the $x$-coordinate is $(c+d)^2/4$, which must equal the $x$-coordinate of $-P_1=(1/4)(2b+c-d)(c+d)$.  Equating these two yields $(c+d)(b-d)=0$.  Thus $a=b=d$.  Checking the area, we see the area will be rational if and only if $(a+c)(3a-c)$ is a square.

We observe that since any cyclic quadrilateral with two non-consecutive sides equal is a trapezoid, we have an isosceles trapezoid (which is not a square) with three sides equal if $a \neq c$.  If $a=c$, then we have a square, as a cyclic rhombus must be square.

\end{proof}
So we see that when the rank is 0, the cyclic quadrilateral must have at least three sides equal.  We now examine the converse, which is distinguished by whether or not the quadrilateral is a square.

\begin{theorem}
If the quadrilateral is a square, then the rank is  0, with torsion group $\mathbb{Z}/6\mathbb{Z}$.
\end{theorem}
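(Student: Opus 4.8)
The plan is to reduce the statement to a single explicit curve and then determine its rank and torsion directly. First I would record the curve attached to a square. If the square has rational side length $s$, then $a=b=c=d=s$, the area is $n=s^2$, and $\alpha=(a^2+b^2-c^2+d^2)/2=s^2=n$, so the associated curve is $E:y^2=x^3+s^2x^2-s^4x$. Since rank and torsion are invariants of the $\mathbb Q$-isomorphism class, I would apply the scaling $(x,y)\mapsto(s^2x,s^3y)$ (legitimate because $s\in\mathbb Q$) to reduce, without loss of generality, to the single curve
$$E:\ y^2=x^3+x^2-x=x(x^2+x-1).$$

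Next I would pin down the torsion. The point $(0,0)$ has order $2$, and the point $P_1=(1,1)$ (the image of the construction's $P_1$ under the scaling) satisfies, by the doubling formula \eqref{double} with $n=1$, $x_{2P_1}=(x_1^2+1)^2/(4y_1^2)=1=x_1$; since $2P_1\neq P_1$ this forces $2P_1=-P_1$, i.e.\ $3P_1=\infty$, so $P_1$ has order $3$. Hence $\mathbb Z/6\mathbb Z\subseteq E(\mathbb Q)_{\mathrm{tors}}$. For the reverse inclusion I would either invoke the classification of torsion groups for this family established in Section 4, or argue self-containedly: $E$ has good reduction at $3$ and $\#E(\mathbb F_3)=6$, so by injectivity of reduction on torsion the order of $E(\mathbb Q)_{\mathrm{tors}}$ divides $6$, forcing $E(\mathbb Q)_{\mathrm{tors}}\cong\mathbb Z/6\mathbb Z$.

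The substance of the theorem is the vanishing of the rank, which I would establish by a complete descent via the $2$-isogeny with kernel $\{\infty,(0,0)\}$. The isogenous curve is $\hat E:y^2=x(x^2-2x+5)$. The key observation is that its quadratic factor is positive-definite, $x^2-2x+5=(x-1)^2+4>0$, so every affine point of $\hat E$ has $x\ge 0$; consequently the connecting homomorphism $\hat\alpha:\hat E(\mathbb Q)\to\mathbb Q^*/(\mathbb Q^*)^2$ takes values whose square-free parts are positive and divide $5$, giving $\mathrm{Im}\,\hat\alpha\subseteq\{1,5\}$, with equality since $\hat\alpha(0,0)=5$. On the other side, the constant term of the quadratic factor of $E$ is $-1$, so every square-free part divides $-1$ and $\mathrm{Im}\,\alpha\subseteq\{1,-1\}$, again with equality via $\alpha(0,0)=-1$. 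Substituting $|\mathrm{Im}\,\alpha|=|\mathrm{Im}\,\hat\alpha|=2$ into $2^{\mathrm{rank}}=|\mathrm{Im}\,\alpha|\cdot|\mathrm{Im}\,\hat\alpha|/4$ yields $2^{\mathrm{rank}}=1$, whence the rank is $0$.

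I expect the main obstacle to be arranging the descent so that the bounds on the two images are genuinely sharp. Because the divisibility constraint on the square-free part of an $x$-coordinate applies to \emph{all} rational points (not merely to a Selmer group), and because the positivity of $(x-1)^2+4$ removes the sign ambiguity for $\hat E$, the resulting upper bounds on $|\mathrm{Im}\,\alpha|$ and $|\mathrm{Im}\,\hat\alpha|$ coincide with the lower bounds supplied by the $2$-torsion. This leaves no room for a contribution from the Tate--Shafarevich group, so the descent is exact and no separate local solvability analysis is needed; the only routine care required is the standard verification that the square-free part of the $x$-coordinate of a rational point divides the relevant constant term.
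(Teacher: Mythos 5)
Your proof is correct, and its first step coincides exactly with the paper's: both scale $(x,y)\mapsto(s^2x,s^3y)$ to reduce every square to the single curve $y^2=x^3+x^2-x$. Where you diverge is in how the rank and torsion of that fixed curve are established. The paper simply asserts that $Y^2=X^3+X^2-X$ has rank $0$ and six rational points (implicitly a table lookup or software computation), whereas you prove both claims from scratch: the order-$3$ point $(1,1)$ via the duplication formula together with injectivity of reduction at the good prime $3$ (where $\#E(\mathbb F_3)=6$) pins down the torsion as exactly $\mathbb Z/6\mathbb Z$, and a complete $2$-isogeny descent to $\hat E:y^2=x(x^2-2x+5)$ gives the rank. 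Your key observations check out: the square-free part of the $x$-coordinate of a rational point divides the constant term of the quadratic factor, which is $-1$ for $E$ and $5$ for $\hat E$, and the positive-definiteness of $(x-1)^2+4$ eliminates the negative classes on $\hat E$; since these bounds apply to actual Mordell--Weil points, $|\mathrm{Im}\,\alpha|=|\mathrm{Im}\,\hat\alpha|=2$ and $2^{\mathrm{rank}}=1$ with no Selmer or Tate--Shafarevich input. The trade-off is clear: the paper's version is a one-line citation of a computation, while yours is longer but unconditional, self-contained, and independent of software; both are valid.
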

\begin{proof}
If the quadrilateral is a square, i.e., $a=b=c=d$. Then the curve $E_{\alpha,-n^2} $ is $y^2=x^3+a^2x^2-a^4x$. For any $a\neq 0$ we can do a change of variables $x=a^2X$, $y=a^3Y$, which shows that this curve is isomorphic to $E:Y^2=X^3+X^2-X$, which is a curve of rank $0$, with six torsion points.\\
Thus also the curve $E_{\alpha,-n^2} $ has only 6 rational points. These points are $\{\mathcal{O}, (0,0)$, $(a^2,\pm a^3)$, $(-a^2,\pm a^3)\}.$
The point $P = (-a^2,a^3)$ has exact order 6. The rank of $E_{\alpha,-n^2} $ is zero.
\end{proof}

The rank need not be 0 for isosceles trapezoids with three sides equal.  Take, for example the quadrilateral with side lengths $(13,13,23,13)$, which yields the curve $E_{-11,-216^2}$.  This curve has rank 1, with generating point $(-196,1092)$ and has torsion group $\mathbb{Z}/6\mathbb{Z}$.  We also remark that we can have rank 0 curves $E_{\alpha,-n^2}$ with torsion group $\mathbb{Z}/2\mathbb{Z} \times \mathbb{Z}/4\mathbb{Z}$.  Take for example $\alpha=7, n=12$.  The previous results prove that such curves do not correspond with a cyclic quadrilateral.  If we allow quadrilaterals with $d=0$, then it can be shown that these curves come from quadrilaterals with $d=0$, i.e. triangles with rational area.



\begin{theorem}
If the quadrilateral is a three-sides-equal trapezoid $(a,a,c,a)$, then the torsion group is $\mathbb{Z}/6\mathbb{Z}$.
\end{theorem}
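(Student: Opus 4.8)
The plan is to produce, for the three-sides-equal trapezoid $(a,b,c,d)=(a,a,c,a)$, an explicit rational point of order $3$ on $E_{\alpha,-n^2}$; once such a point is in hand, the classification of the possible torsion groups (established in Section 4, and already invoked in the proof of Theorem \ref{rank0}) will pin the group down uniquely. Setting $b=d=a$ gives $\alpha=\frac{a^2+b^2-c^2+d^2}{2}=\frac{3a^2-c^2}{2}$, and the natural candidate for the $3$-torsion point is $P_1$ itself. Indeed $P_1=(x_1,y_1)$ with $x_1=\frac{(a+c)^2-(b-d)^2}{4}=\frac{(a+c)^2}{4}$, which is nonzero, so $P_1\neq\mathcal{O}$; and $P_1$ is not $2$-torsion, as shown in general before Theorem \ref{main}.

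The key step is the verification that $P_1$ is a $3$-torsion point, i.e. that $2P_1=-P_1$. For this I will reuse the doubling computation already recorded in the text, namely $x_{2P_1}=\frac{(ac+bd)^2}{4a^2}$. Substituting $b=d=a$ gives
$$x_{2P_1}=\frac{(ac+a^2)^2}{4a^2}=\frac{a^2(a+c)^2}{4a^2}=\frac{(a+c)^2}{4}=x_1.$$
Since $x_{2P_1}=x_{P_1}$, the point $2P_1$ equals $\pm P_1$; the choice $2P_1=P_1$ would force $P_1=\mathcal{O}$, which is false, so $2P_1=-P_1$ and hence $3P_1=\mathcal{O}$. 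As $P_1\neq\mathcal{O}$, the point $P_1$ has order exactly $3$.

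It remains to assemble the conclusion. The curve always contains the $2$-torsion point $(0,0)$, so together with the order-$3$ point $P_1$ we obtain a subgroup isomorphic to $\mathbb{Z}/6\mathbb{Z}$ inside the torsion, with $P_1+(0,0)$ a point of order $6$. By the classification of torsion groups for this family (Section 4), $\mathcal{T}\in\{\mathbb{Z}/2\mathbb{Z},\ \mathbb{Z}/2\mathbb{Z}\times\mathbb{Z}/2\mathbb{Z},\ \mathbb{Z}/2\mathbb{Z}\times\mathbb{Z}/4\mathbb{Z},\ \mathbb{Z}/6\mathbb{Z}\}$, and of these only $\mathbb{Z}/6\mathbb{Z}$ has order divisible by $3$; since $3\mid\#\mathcal{T}$, we conclude $\mathcal{T}=\mathbb{Z}/6\mathbb{Z}$.

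I expect the only real subtlety to be bookkeeping rather than computation. The central identity $x_{2P_1}=x_1$ is immediate from a formula the paper has already derived, so the main points requiring care are (i) the sign determination $2P_1=-P_1$ versus $2P_1=P_1$ (handled by $P_1\neq\mathcal{O}$), and (ii) the legitimacy of citing the Section 4 classification. Note that, unlike Theorem \ref{rank0}, this argument makes no assumption on the rank---the order-$3$ point $P_1$ exists for every such trapezoid---which is consistent with the rank-$1$ example $(13,13,23,13)$ yielding $E_{-11,-216^2}$ with torsion $\mathbb{Z}/6\mathbb{Z}$.
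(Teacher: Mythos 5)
Your proof is correct and takes essentially the same route as the paper: both exhibit the point $P_1$ with $x_1=\tfrac{1}{4}(a+c)^2$ as a $3$-torsion point and then invoke the Section~4 classification of possible torsion groups to force $\mathbb{Z}/6\mathbb{Z}$. The paper merely asserts the order-$3$ claim ``can be checked,'' whereas you verify it cleanly from the already-derived identity $x_{2P_1}=\frac{(ac+bd)^2}{4a^2}$, which with $b=d=a$ gives $x_{2P_1}=x_1$ and hence $2P_1=-P_1$.
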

\begin{proof}
If the the quadrilateral is of the form $(a,a,c,a)$, then we have the point $(\frac{1}{4}(a+c)^2,\frac{1}{16}a^2(a+c)^4)$ which can be checked has order 3.  By Corollary 4.5, we immediately have that the torsion group is $\mathbb{Z}/6\mathbb{Z}$.
%
\end{proof}

By scaling the sides of a rational cyclic quadrilateral, we can always assume that area is an integer $N$ (since we are only considering quadrilaterals with rational area).  Using the definition of the generalized congruent number elliptic curve and taking into account Theorems 3.3, 3.4, and 3.5, we can restate Theorem 3.1 in the following way.
\begin{theorem}
Every non-square and non-three-sides equal trapezoidal rational cyclic quadrilateral with area $N \in \mathbb{N}$ gives rise to a generalized congruent number elliptic curve $E_{\alpha,-N^2}$ with positive rank.  Conversely, for any integer $N$ and generalized congruent curve $E_{\alpha,-N^2}$ with positive rank, there are infinitely many non-rectangular cyclic quadrilaterals.
\end{theorem}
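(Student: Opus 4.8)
The plan is to derive this theorem as a consequence of Theorem \ref{main} together with the rank-zero classification carried out in Theorem \ref{rank0} and Theorems 3.3--3.5; both directions reduce to careful bookkeeping once those results are in hand. The scaling remark preceding the statement lets us assume throughout that the area is a fixed integer $N$: scaling all four sides by $\lambda$ replaces $(\alpha,n)$ by $(\lambda^2\alpha,\lambda^2 n)$, and the corresponding curves are isomorphic via $(x,y)\mapsto(\lambda^2 x,\lambda^3 y)$, so neither the rank nor the geometric conclusion is affected by this normalization.

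For the forward direction I would begin with a rational cyclic quadrilateral of area $N$ that is neither a square nor a three-sides-equal trapezoid, and apply the construction of Theorem \ref{main} to produce the curve $E_{\alpha,-N^2}$ carrying the two points $P_1,P_2$, neither of order $2$. It then suffices to show the rank is positive, for then the curve has a point of infinite order and is, by definition, a generalized congruent number curve. This is precisely the contrapositive of Theorem \ref{rank0}: were the rank $0$, the associated quadrilateral would have to be a square or a three-sides-equal isosceles trapezoid, contrary to hypothesis. Hence the rank is positive and the forward claim is immediate.

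For the converse I would invoke the converse half of Theorem \ref{main}, whose proof yields not one but infinitely many cyclic quadrilaterals $(a,b,c,d)$ of area $N$ associated to $E_{\alpha,-N^2}$: fixing $P_1$ (hence $a$), the density argument produces infinitely many admissible side lengths $b$, and distinct values of $b$ force distinct side-length multisets, since a fixed multiset permits only finitely many choices of any single side; these quadrilaterals are therefore genuinely non-congruent. The remaining point is to rule out all but finitely many rectangles. The key observation is rigidity: a rectangle corresponding to $E_{\alpha,-N^2}$ has a right angle, so $\cos\theta=0$, which forces $\alpha$ to equal the square of a side while the area equals the product of the two distinct sides; hence its sides must be $\sqrt{\alpha}$ and $N/\sqrt{\alpha}$, and at most one rectangle (up to congruence) can arise. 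Consequently infinitely many of the corresponding quadrilaterals are non-rectangular, as claimed.

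The main obstacle is this distinctness-and-counting step in the converse rather than any new analytic input: the positivity of $c,d$ and the density of rational points on a positive-rank curve are already furnished by Theorem \ref{main}. What must be argued with care is that the infinitely many rational points on $C(b,z)$ really translate into infinitely many \emph{incongruent} quadrilaterals, and that the rigid side-length description of a rectangle leaves only finitely many of them rectangular. Both facts are elementary, but they must be stated precisely in order to make the phrase ``infinitely many non-rectangular'' rigorous.
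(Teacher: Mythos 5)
Your proposal is correct, and its overall architecture coincides with the paper's: the forward direction is word-for-word the paper's argument (scale to integer area, apply Theorem \ref{main}, and conclude positivity of the rank from the contrapositive of Theorem \ref{rank0}), and the converse likewise starts from the converse half of Theorem \ref{main} and the infinitude of points on a positive-rank curve. Where you diverge is in the final step of the converse. The paper excludes rectangles by noting that a rectangle forces $b=d$, that $d=-y_{P_3}/x_{P_3}$ for $P_3=-(P_1+P_2)$, and that only finitely many points $P=(x,y)$ on $E$ satisfy $y/x=\text{const}$ (the line $y=\text{const}\cdot x$ meets the cubic in at most three affine points), so one may choose $P_1,P_2$ to avoid them. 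You instead exploit the rigidity of the rectangle itself: $\cos\theta=0$ gives $\alpha=b^2$ and $N=ab$, so the only rectangle compatible with $E_{\alpha,-N^2}$ has sides $\sqrt{\alpha}$ and $N/\sqrt{\alpha}$, and a single congruence class cannot absorb infinitely many quadrilaterals. Both mechanisms work; yours is slightly cleaner in that it needs no bookkeeping on the curve. More importantly, you make explicit a point the paper glosses over: that infinitely many choices of $(P_1,P_2)$ actually yield infinitely many \emph{incongruent} quadrilaterals. Your observation that the density argument in Theorem \ref{main} produces infinitely many distinct values of $b$, and that a fixed side-length multiset can account for only finitely many such values, supplies exactly the missing justification, so your write-up is in this respect more complete than the published proof.
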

\begin{proof}
Given a non-square and non-three-sides equal trapezoidal cyclic quadrilateral with side lengths $(a,b,c,d)$ and area $n=p/q$, consider the quadrilateral with side lengths $(qa,qb,qc,qd)$ which has area $pq \in \mathbb{Z}$.  By our correspondence in Theorem \ref{main}, we can construct the curve $E_{\alpha,-(pq)^2}$, where $\alpha=q^2(a^2+b^2-c^2+d^2)/2$.  If the curve were to have rank 0, then by Theorem \ref{rank0} the quadrilateral would have to have 3 sides equal, which it doesn't.  Hence $E_{\alpha,-(pq)^2}$ has positive rank.

For the converse, given any integer $N$ and $\alpha$ such that $E_{\alpha,-N^2}$ has positive rank, then again by Theorem \ref{main} we are able to construct a cyclic quadrilateral with area $N$.  As the rank is positive, we have an infinite number of choices for $P_1,P_2$ in the correspondence, yielding an infinite number of cyclic quadrilaterals.  If the quadrilateral were to be a rectangle, then $b=d$.  Recall $d=y_{P_3}/x_{P_3}$, where $P_3=-(P_1+P_2)$.  But there are only five points $P=(x,y)$ on $E$ such that $y/x=d$, so we can choose $P_1,P_2$ so as to avoid these five points.
\end{proof}
We conclude this section by noting there are an infinite number of non-rectangular cyclic quadrilaterals with area $n$, for any rational $n$.  Specifically, consider the isosceles trapezoid (which is necessarily cyclic) with side lengths $(j^2+k^2,\ell,j^2+k^2,\ell+2j^2-2k^2)$, where $j>k$.  The height of this trapezoid is $2jk$, yielding an area of $2jk(\ell+j^2-k^2)$.  Hence, by choosing $\ell=\frac{n}{2jk}+k^2-j^2$, the trapezoid will have area $n$.

\section{Torsion points}
In this section, we examine the possible torsion groups $T$ for the curve $E_{\alpha,-n^2}$, corresponding to a cyclic quadrilateral.  By a theorem of Mazur \cite{Sil}, the only possible torsion groups over
$\mathbb Q$, $E(\mathbb Q)_{\rm tors}$, are $\mathbb Z/n\mathbb Z$ for $n=1,2,\ldots, 10, 12$ or
$\mathbb Z/2\mathbb Z\times\mathbb Z/2n\mathbb Z$ for $1\leq n\leq4$.
The point $P_2=(0,0)$ has order 2, hence we know the order of the torsion group must be even.  We will show that the torsion group must be $\mathbb{Z}/2\mathbb{Z}$, $\mathbb{Z}/6\mathbb{Z}$, $\mathbb{Z}/2\mathbb{Z} \times \mathbb{Z}/2\mathbb{Z}$, or $\mathbb{Z}/2\mathbb{Z} \times \mathbb{Z}/4\mathbb{Z}$.  We begin by showing $T \neq \mathbb{Z}/4\mathbb{Z},$ $\mathbb{Z}/8\mathbb{Z}$, or $\mathbb{Z}/12\mathbb{Z}$.

\begin{lemma}
There is no point $P$ on the curve $E_{\alpha,-n^2}$ such that $2P=(0,0)$.  Consequently, the torsion group $T \neq \mathbb{Z}/4\mathbb{Z},$ $\mathbb{Z}/8\mathbb{Z}$, or $\mathbb{Z}/12\mathbb{Z}$.
\end{lemma}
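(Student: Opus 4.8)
The plan is to read off the first claim directly from the doubling formula \eqref{double}. For a point $P=(x,y)$ with $y\neq 0$, that formula gives the $x$-coordinate of $2P$ as $(x^2+n^2)^2/(4y^2)$. The key observation is that, because $n\neq 0$ and $x$ is rational and hence real, the numerator $(x^2+n^2)^2$ is strictly positive; therefore $x_{2P}>0$, and in particular $x_{2P}\neq 0$, so $2P\neq(0,0)$. First I would dispose of the degenerate cases: if $y=0$ then $P$ is either a point of order $2$ or $P=\infty$, and in both cases $2P=\infty\neq(0,0)$. Together these cover every point $P$, establishing that $(0,0)$ is not divisible by $2$ in $E_{\alpha,-n^2}(\mathbb{Q})$.

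For the consequence, I would argue by contradiction using the structure of cyclic groups. Each of the three candidate groups $\mathbb{Z}/4\mathbb{Z}$, $\mathbb{Z}/8\mathbb{Z}$, and $\mathbb{Z}/12\mathbb{Z}$ is cyclic of order divisible by $4$, and hence contains a unique element of order $2$. Since $(0,0)$ is a $2$-torsion point on the curve, it must coincide with this unique element. The final step is to note that in a cyclic group $\mathbb{Z}/2m\mathbb{Z}$ with $4\mid 2m$ the index $m$ is even, so the unique order-$2$ element $m$ equals $2\cdot(m/2)$ and is therefore twice another torsion point $Q$. This would produce a rational point $Q$ with $2Q=(0,0)$, directly contradicting the first part, so none of these three torsion groups can occur.

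I expect the first part to require essentially no work once the doubling formula is invoked, so the only place demanding care is the bookkeeping in the consequence: one must verify that $(0,0)$ really is forced to be the unique order-$2$ element (which follows because a cyclic group has a single subgroup of order $2$) and that this element lies in $2T$ precisely because the group order is a multiple of $4$. This divisibility condition is exactly what distinguishes the excluded groups $\mathbb{Z}/4\mathbb{Z}$, $\mathbb{Z}/8\mathbb{Z}$, $\mathbb{Z}/12\mathbb{Z}$ from the cyclic groups left open by the argument, namely $\mathbb{Z}/2\mathbb{Z}$, $\mathbb{Z}/6\mathbb{Z}$, $\mathbb{Z}/10\mathbb{Z}$, whose orders are only $\equiv 2\pmod 4$ and whose order-$2$ element is not a double.
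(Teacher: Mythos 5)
Your proposal is correct and follows essentially the same route as the paper: the first part reads off from the doubling formula that the $x$-coordinate of $2P$ is $(x^2+n^2)^2/(4y^2)\neq 0$, and the second part notes that in a cyclic group of order divisible by $4$ the unique element of order $2$ (which must be $(0,0)$) is a double of another element. Your write-up merely makes explicit a few details the paper leaves implicit, namely the $y=0$ cases and the precise reason $(0,0)$ lies in $2T$.
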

\begin{proof}
Let $P=(x,y)$ be a point on $E_{\alpha,-n^2}$ such that $2P=(0,0)$.  By using the formula for doubling a point (see \eqref{double}), we must have
$$\frac{(x^2+n^2)^2}{4y^2}=0.$$
However, this is clearly impossible, since $x^2+n^2>0$.  Thus no such point $P$ exists.

Note that if the torsion group were $\mathbb{Z}/4\mathbb{Z},$ $\mathbb{Z}/8\mathbb{Z}$, or $\mathbb{Z}/12\mathbb{Z}$, then there would necessarily have to be a point $P$ with $2P=(0,0)$, since $(0,0)$ would be the unique point of order 2.  As this is not possible, then $T$ cannot be any of these three groups.
\end{proof}

\begin{proposition}
There are no points of order 5 on the curve $E_{\alpha,-n^2}$.
\end{proposition}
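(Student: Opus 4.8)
The plan is to suppose, for contradiction, that $P$ is a rational point of order $5$. Since $(0,0)$ is a rational point of order $2$ on $E_{\alpha,-n^2}$, the subgroup $\langle P,(0,0)\rangle$ is isomorphic to $\mathbb Z/10\mathbb Z$; by Mazur's theorem the torsion is then exactly $\mathbb Z/10\mathbb Z$. The key structural fact is that such a curve carries \emph{both} a rational $5$-torsion point and a rational $2$-torsion point, and I would exploit this by placing the order-$5$ point in Tate normal form and then imposing the existence of a rational $2$-torsion point, which cuts the family down to a single parameter.

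Concretely, any curve with a rational point of order $5$ can be written as $y^2+(1-b)xy-by=x^3-bx^2$ with the $5$-torsion point at $(0,0)$ (the order-$5$ condition forces the two Tate parameters to coincide). Completing the square gives a model $y^2=x^3+Ax^2+Bx+C$ with $A,B,C$ explicit polynomials in $b$. I would then write down the cubic whose roots are the $x$-coordinates of the $2$-torsion and require that it have a rational root. That cubic is quadratic in $b$, and its discriminant works out to $-16x_0^5$ (where $x_0$ is the $2$-torsion $x$-coordinate); hence a rational root exists exactly when $x_0=-w^2$ for a parameter $w$, and solving for $b$ yields $b=\dfrac{w^2(2w-1)}{(w-1)^2}$. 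This is the rational parametrization of the curves carrying $\mathbb Z/10\mathbb Z$.

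Next I would translate the rational $2$-torsion point $x_0=-w^2$ to the origin, obtaining a model $y^2=x^3+\alpha' x^2+\beta' x$ of exactly the shape $E_{\alpha',\beta'}$, and then read off the coefficient of $x$. A direct computation gives $\beta'=\dfrac{w^5(w^2-3w+1)}{(w-1)^3}$. Since $E_{\alpha,-n^2}$ has $\beta=-n^2$, an honest order-$5$ point forces $\beta'=-n^2$, i.e. $-\beta'$ must be a nonzero rational square. Factoring out the perfect square $\big(w^2/(w-1)^2\big)^2$, this is equivalent to the single condition that $-w(w-1)(w^2-3w+1)$ be a rational square.

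Everything thus reduces to the rational points of the genus-one curve $C:\ z^2=-w(w-1)(w^2-3w+1)$. Substituting $w=1/u$ and then $u=U+1$ turns $C$ into the Weierstrass curve $Z^2=U^3-U^2-U$. The main obstacle, and the crux of the argument, is to show that this curve has rank $0$ with torsion only $\{\mathcal O,(0,0)\}$; granting this, its only rational points pull back to $w=0$ and $w=1$, which are precisely the cusps of the parametrization ($b=0$ and a pole of $b$) and correspond to degenerate, singular curves rather than a genuine order-$5$ point. It follows that no $E_{\alpha,-n^2}$ admits a point of order $5$. I expect the rank computation for $Z^2=U^3-U^2-U$ (conductor $80$) to be the only nonroutine step; it can be settled by a $2$-descent over $\mathbb Q(\sqrt5)$ or by citing its entry in the elliptic-curve tables.
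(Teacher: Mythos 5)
Your proposal is correct, and I checked its key algebraic claims: the $2$-torsion cubic of the completed-square Tate model $y^2+(1-b)xy-by=x^3-bx^2$ is $\tfrac14\bigl((x+1)^2b^2-2x(3x+1)b+x^2(4x+1)\bigr)$, whose discriminant in $b$ is indeed $-16x^5$; the resulting $b=\tfrac{w^2(2w-1)}{(w-1)^2}$ and the value $\beta'=\tfrac{w^5(w^2-3w+1)}{(w-1)^3}$ after translating $x_0=-w^2$ to the origin both check out (e.g.\ $w=2$ gives $\beta'=-32$); and the substitutions $w=1/u$, $u=U+1$ do carry $z^2=-w(w-1)(w^2-3w+1)$ to $Z^2=U^3-U^2-U$, a rank-$0$ curve (conductor $80$) with torsion $\{\mathcal O,(0,0)\}$, whose only affine rational point pulls back to the degenerate values $w=0,1$. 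However, your route is genuinely different from the paper's. You exploit the fact that a rational $5$-torsion point together with the marked $2$-torsion point $(0,0)$ forces a rational point of order $10$, so the curve lies on the genus-$0$ modular curve $X_1(10)$; you parametrize that family via the Tate normal form and then impose the extra condition that the coefficient $\beta$ at the $2$-torsion point be $-(\text{nonzero square})$, which cuts out a small, clean genus-one double cover. The paper instead uses only the $5$-torsion hypothesis, via the classical fact that such a curve has $j$-invariant $(s^2+10s+5)^3/s$; equating this with $j(E_{\alpha,-n^2})=256(\alpha^2+3n^2)^3/\bigl(n^4(\alpha^2+4n^2)\bigr)$, parametrizing the resulting genus-$0$ locus, and demanding that $\alpha^2$ be a square leads to a rank-$0$ curve with much larger coefficients, $Y^2=X^3+76032X-8183808$, settled by SAGE. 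Both proofs bottom out in a rank-$0$ computation on a single elliptic curve, so neither is more elementary in substance; what your approach buys is a structurally transparent reduction (you are really ruling out $\mathbb Z/10\mathbb Z$ torsion compatible with $\beta=-n^2$) and a far smaller final curve that can be dispatched by a table lookup or a short $2$-descent, while the paper's approach has the minor advantage of never invoking the $2$-torsion point at all. One small caveat: you should note explicitly that $x_0=-w^2$ is the \emph{only} rational $2$-torsion abscissa to consider (full rational $2$-torsion together with $5$-torsion is already excluded by Mazur), so that translating this particular root to the origin loses no generality; and the scaling ambiguity $(x,y)\mapsto(u^2x,u^3y)$ between your model and $E_{\alpha,-n^2}$ is harmless precisely because the condition ``$-\beta'$ is a nonzero square'' is invariant under $\beta'\mapsto u^4\beta'$.
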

\begin{proof}
By an old result, an elliptic curve with a rational point of order 5 will have its $j$-invariant of the form $(s^2+10s+5)^3/s$ for some $s \in \mathbb Q$ \cite{Fricke}.
Calculating the $j$-invariant of $E_{\alpha,-n^2}$, we must therefore have $$256\frac{(\alpha^2+3n^2)^3}{n^4(\alpha^2+4n^2)}=\frac{(s^2+10s+5)^3}{s}.$$
Let $w=\alpha^2+4n^2$, so that
$$256\frac{(w-n^2)^3}{n^4w}=\frac{(s^2+10s+5)^3}{s},$$
and write $w-n^2=c(s^2+10s+5)$.  Simplifying we obtain the (genus 0) curve
$$256c^3s-n^6-n^4cs^2-10n^4cs-5n^4c=0,$$
with rational point $(c,s)=(-n^2/32,9/4)$.
We can parameterize all solutions by
$c=-\frac{1}{32}\frac{n^2(144n^8m^2+24n^4m+1)}{6n^4m+1},$ and $s=\frac{3}{4}\frac{20n^4m+3}{432n^{12}m^3+180n^8m^2+24n^4m+1}$.
Since $\alpha^2=w-4n^2=c(s^2+10s+5)-3n^2$,
we can solve for $\alpha^2$ in terms of $m$ (and $n$):
$$\alpha^2=-\frac{n^2(720n^8m^2+216n^4m+17)(144n^8m^2+96n^4m+11)^2}{512(6n^4m+1)^5}.$$
This equation will have rational solutions if and only if the curve
$$C:z^2=-2(6n^4m+1)(720n^8m^2+216n^4m+17)$$
has rational points.  The curve $C$ is birationally equivalent to the curve
$$E:Y^2=X^3+76032X-8183808,$$
using the maps
$$(X,Y)=\left(-8640n^4m-1344, 8640z \right),$$
$$(m,z)=\left(-\frac{X+1344}{8640n^4}, \frac{Y}{8640} \right).$$
With SAGE, we compute that this curve $E$ has rank 0, with only one torsion point (96,0) \cite{Sag}.  This corresponds to having $(m,z)=(-1/(6n^4),0)$, which is thus the only rational point on $C$.  However, this leads to no rational points on the curve relating $\alpha^2$ and $n,m$.  From this we may conclude our initial assumption was not possible.  Thus, there are no elliptic curves resulting from cyclic quadrilaterals with rational area which have a point of order 5.
\end{proof}

The previous two lemmas show that if $T$ is cyclic, then $T$ must either be $\mathbb{Z}/2\mathbb{Z}$ or $\mathbb{Z}/6\mathbb{Z}$.  We now turn to the case when $T$ is not cyclic, i.e., $T$ has more than one point of order 2.  This will occur precisely when $x^2+\alpha x-n^2=0$ has a rational root, which happens if and only if the discriminant $\alpha^2+4n^2=a^2b^2+a^2d^2+b^2d^2+2abcd$ is a square.

\begin{lemma}
The torsion group $T$ is not $\mathbb{Z}/2\mathbb{Z} \times \mathbb{Z}/8\mathbb{Z}$.
\end{lemma}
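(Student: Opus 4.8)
The plan is to assume for contradiction that $T=\mathbb Z/2\mathbb Z\times\mathbb Z/8\mathbb Z$ and to run a complete $2$-descent on the rational $2$-torsion, reducing everything to Fermat's classical theorem that $X^4-Y^4=Z^2$ has no rational solutions with $XYZ\neq 0$. Since $\mathbb Z/2\mathbb Z\times\mathbb Z/8\mathbb Z$ contains full $2$-torsion, all three $2$-torsion points are rational, namely $(0,0)$, $(e_1,0)$ and $(e_2,0)$, where $e_1,e_2$ are the roots of $x^2+\alpha x-n^2$. Because $e_1 e_2=-n^2<0$, exactly one of them is positive; without loss of generality $e_1>0>e_2$.

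First I would determine which $2$-torsion point is divisible. A point $Q$ of order $8$ gives $R=2Q$ of order $4$ together with $4Q\in E[2]$, and $4Q\in 4E(\mathbb Q)\subseteq 2E(\mathbb Q)$ is the unique $2$-torsion point lying in $2E(\mathbb Q)$. By Lemma 4.1 we have $(0,0)\notin 2E(\mathbb Q)$. By the complete $2$-descent criterion, a $2$-torsion point $(e_i,0)$ lies in $2E(\mathbb Q)$ only if its differences with the other two roots are rational squares; in particular $e_i-0=e_i$ must be a square, which is impossible for $e_2<0$. Hence the divisible point must be $4Q=(e_1,0)$, and its divisibility by $2$ forces both $e_1$ and $e_1-e_2$ to be squares, say $e_1=u^2$. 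Writing $e_2=-n^2/u^2$, the condition on $e_1-e_2$ reads $u^4+n^2=v^2$ for some rational $v$.

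Next I would push the divisibility one step further, since $(e_1,0)=4Q$ is in fact divisible by $4$. The point $R=2Q$ satisfies $2R=(e_1,0)$ and $R\in 2E(\mathbb Q)$, and the rational halvings of $(e_1,0)$ have $x$-coordinate $u^2\pm v$. The branch $x=u^2-v$ gives $x-e_1=-v<0$, which cannot be a square, so $R$ has $x$-coordinate $u^2+v$. Applying the $2$-descent criterion to $R$ now forces $x-e_1=v$ to be a rational square, say $v=p^2$. Combined with $v^2=u^4+n^2$ this yields $n^2=p^4-u^4$ with $u,p,n$ all nonzero, contradicting Fermat's theorem. Therefore $T\neq\mathbb Z/2\mathbb Z\times\mathbb Z/8\mathbb Z$.

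The main obstacle is the descent bookkeeping rather than any hard computation: one must pin down that $(e_1,0)$ is the only possible divisible $2$-torsion point (using both Lemma 4.1 and the sign of $e_2$), and then verify that the order-$4$ point $R=2Q$ is the $u^2+v$ branch and genuinely lies in $2E(\mathbb Q)$. Once the two square conditions $u^4+n^2=v^2$ and $v=p^2$ are correctly isolated, the identity $n^2=p^4-u^4$ and the appeal to Fermat are immediate. The one technical ingredient to state carefully is the complete $2$-descent criterion itself, both for a general point (each $x_0-e_i$ a square) and for a $2$-torsion point (the two nontrivial root-differences squares), as this is the precise source of the square conditions.
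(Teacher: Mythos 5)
Your proof is correct, but it takes a genuinely different route from the paper. The paper invokes Ono's theorem on Euler's concordant forms twice: first to list the three square conditions under which $\mathbb{Z}/2\mathbb{Z}\times\mathbb{Z}/4\mathbb{Z}$ embeds in the torsion (eliminating two of the three cases by sign considerations, exactly parallel to your observation that $e_2<0$ and $-e_1<0$ kill two of the three $2$-torsion points as candidates for divisibility), and then to import the precise parametrization $M=u^4-v^4$, $N=-v^4$ with $u^2+v^2=w^2$ that characterizes torsion $\mathbb{Z}/2\mathbb{Z}\times\mathbb{Z}/8\mathbb{Z}$; combining this with the forced squareness of $M$ gives $u^4-v^4=z^2$ and hence Fermat's contradiction. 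You instead run the complete $2$-descent by hand: you correctly identify $(e_1,0)$ as the only $2$-torsion point that can lie in $2E(\mathbb{Q})$, extract $e_1=u^2$ and $u^4+n^2=v^2$ from its divisibility by $2$, and then use divisibility by $4$ (via the halving $x$-coordinates $u^2\pm v$, which one can confirm from the paper's duplication formula since the preimage quartic factors as $(x^2-2u^2x-n^2)^2$) to force $v=p^2$, arriving at $n^2=p^4-u^4$ and the same appeal to Fermat. I checked the one point where your bookkeeping could slip --- that the torsion elements of $2E(\mathbb{Q})$ are exactly $2T$ regardless of the rank, so $4Q$ really is the unique nontrivial $2$-torsion point in $2E(\mathbb{Q})$ --- and it holds. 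Your version is more self-contained (it needs only the standard injectivity of the complete $2$-descent map rather than Ono's classification), at the cost of the halving computation; the paper's version is shorter on the page but leans on an external structure theorem.
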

\begin{proof}
Suppose we have a curve with torsion group $\mathbb{Z}/2\mathbb{Z} \times \mathbb{Z}/8\mathbb{Z}$.  Then necessarily we have three 2-torsion points, so
we can assume $x^2+\alpha x-n^2=(x+M)(x+N)$, for some rational $M,N \neq 0$.  The points $(0,0), (-M,0),$ and $(-N,0)$ all have order 2 on $E_{\alpha,-n^2}$.
By a theorem of Ono [\cite{Ono},Main Theorem 1], the torsion group of $E_{\alpha,-n^2}$ contains $\mathbb{Z}/2\mathbb{Z} \times \mathbb{Z}/4\mathbb{Z}$ as a subgroup if (i) $M$ and $N$ are both squares, or if (ii) $-M$ and $N-M$ are both squares, or if (iii) $-N$ and $M-N$ are both squares.  We show only case (iii) is possible.

Without loss of generality we may take $\alpha^2+4n^2=r^2$, (with $r>0$) and $M=\frac{1}{2}(\alpha+r)$, $N=\frac{1}{2}(\alpha-r)$.  For case (i), if $M$ and $N$ are both squares, then so is $MN$.  However, this is a contradiction as $MN=-n^2$ and $n>0$.  For case (ii), $N-M=-r$ and so cannot be a square ($r>0$).

Therefore we must be in case (iii), and we have both $-N$ and $M-N=r$ squares.  If we write $-N=j^2$ for some $j$, then $MN=-n^2$ and so $M=-n^2/N=n^2/j^2$, hence $M$ is square.  By the second part of Ono's theorem, if $T= \mathbb{Z}/2\mathbb{Z} \times \mathbb{Z}/8\mathbb{Z}$ then $M=u^4-v^4$ and $N=-v^4$, with $u^2+v^2=w^2$.  As $M$ is square then $u^4-v^4=z^2$ for some rational $z$.  This equation is well known to have no non-trivial solutions, i.e. only when $M=u^4-v^4=0$.  However, this contradicts our initial assumption, and so
supposing $T=\mathbb{Z}/2\mathbb{Z} \times \mathbb{Z}/8\mathbb{Z}$ must have been incorrect.

\end{proof}

\begin{lemma}
The torsion group $T$ is not $\mathbb{Z}/2\mathbb{Z} \times \mathbb{Z}/6\mathbb{Z}$.
\end{lemma}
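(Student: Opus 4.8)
The plan is to show that the two requirements for $T=\mathbb{Z}/2\mathbb{Z}\times\mathbb{Z}/6\mathbb{Z}$ --- full rational $2$-torsion together with a rational point of order $3$ --- are incompatible for a curve of the special shape $E_{\alpha,-n^2}$. As noted just before the lemma, full $2$-torsion forces $\alpha^2+4n^2$ to be a square; the extra feature here is that $\beta=-n^2$ is the negative of a rational square. First I would record that these two conditions together are equivalent to writing the curve in the two-parameter family
$$E:\ y^2=x(x-st^2)(x+s),\qquad s,t\in\mathbb{Q},\ st\neq 0,$$
with $n^2=(st)^2$ and $\alpha=s(1-t^2)$. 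Indeed, a full-$2$-torsion curve $y^2=x(x-e_2)(x-e_3)$ has $-\beta=-e_2e_3$ a square precisely when $e_2,e_3$ have opposite signs and $-e_2e_3$ is a square, in which case one may take $s=-e_3$ and $t$ with $t^2=-e_2/e_3$, both rational. So this family is exactly the set of curves in which the $2$-torsion obstruction and the shape $\beta=-n^2$ are already built in, and the whole lemma reduces to the statement that no curve in it has a rational $3$-torsion point with $st\neq 0$.

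Next I would impose the order-$3$ condition. A rational point of order $3$ has $x$-coordinate a rational root of the $3$-division polynomial
$$\psi_3(x)=3x^4+4\alpha x^3-6n^2x^2-n^4,$$
with $x(x^2+\alpha x-n^2)$ also a square. Scaling $x=sX$ gives $\psi_3(sX)=s^4 g(X)$, so $\psi_3=0$ reduces to
$$g(X):=3X^4+4(1-t^2)X^3-6t^2X^2-t^4=0.$$
Reading $g$ as a quadratic in $T=t^2$, its discriminant works out to $16X^3(X+1)^3$, so a rational root $X$ with $t\in\mathbb{Q}$ forces $X(X+1)$ to be a perfect square. Parametrizing that conic by $X=(\lambda-1)^2/(4\lambda)$ and solving the quadratic for $t^2$ yields exactly two possibilities,
$$t^2=\frac{-(\lambda-1)^3(\lambda+3)}{16\lambda}\qquad\text{or}\qquad t^2=\frac{(\lambda-1)^3(3\lambda+1)}{16\lambda^3}.$$
Requiring each right-hand side to be a square strips off the square factors and leaves $-\lambda(\lambda-1)(\lambda+3)=\square$ in the first case and $\lambda(\lambda-1)(3\lambda+1)=\square$ in the second.

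The key observation is that both conditions describe the same elliptic curve: the substitutions $\lambda=-u$ and $\lambda=u/3$ respectively transform them into
$$\mathcal{E}:\ Y^2=u(u-3)(u+1).$$
I would then finish exactly as in Proposition 4.2: a computer algebra check (e.g.\ SAGE) shows $\mathcal{E}$ has rank $0$ with torsion subgroup $\mathbb{Z}/2\mathbb{Z}\times\mathbb{Z}/2\mathbb{Z}$, so its only rational points are $\mathcal{O}$ and the $2$-torsion points $u\in\{0,3,-1\}$. Tracing these back through $\lambda=-u$ and $\lambda=u/3$ gives only $\lambda\in\{0,1,-3,-1/3\}$, each of which makes the relevant expression for $t^2$ vanish; hence $t=0$ and $n=st=0$, which is excluded. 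Therefore the family admits no nondegenerate rational $3$-torsion point, and $T\neq\mathbb{Z}/2\mathbb{Z}\times\mathbb{Z}/6\mathbb{Z}$.

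The main obstacle --- and the step I would double-check most carefully --- is the reduction of both square conditions to the single curve $\mathcal{E}$ and the verification that $\mathcal{E}$ really has rank $0$: if $\mathcal{E}$ had a point of infinite order, genuine $E_{\alpha,-n^2}$ with this torsion would exist, so the entire lemma hinges on that rank computation (indeed the truth of the lemma predicts rank $0$). A secondary point to get right is the bookkeeping confirming that the recovered $2$-torsion points of $\mathcal{E}$ correspond only to the degenerate locus $t=0$, so that no legitimate quadrilateral survives.
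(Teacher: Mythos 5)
Your proof is correct, and at the top level it follows the same strategy as the paper: combine the necessary condition that the $3$-division polynomial $\psi_3$ have a rational root with the full-$2$-torsion condition, reduce to an auxiliary genus-one curve, verify that curve has rank $0$, and trace the finitely many rational points back to degenerate data. The intermediate route is genuinely different, though. The paper normalizes by $n$ (setting $w=\alpha/n$), parametrizes $w^2+4=\square$ by a rational $j$, and reads the resulting quartic $3jx^4+(8-2j^2)x^3-6jx^2-j=0$ directly as a genus-one curve in $(x,j)$. You instead build the full $2$-torsion into the model $y^2=x(x-st^2)(x+s)$, treat the normalized quartic as a quadratic in $t^2$, force its discriminant $16X^3(X+1)^3$ to be a square via the conic $X(X+1)=\square$, and only then impose that $t^2$ itself be a square --- which is where the elliptic curve appears. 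I checked your computations (the discriminant factorization, the two expressions for $t^2$, and the substitutions $\lambda=-u$, $\lambda=u/3$) and they are right; moreover, a $2$-isogeny descent on $Y^2=u(u-3)(u+1)$ confirms rank $0$ with torsion $\mathbb{Z}/2\mathbb{Z}\times\mathbb{Z}/2\mathbb{Z}$, so the only rational points are the three $2$-torsion points, exactly as you need. A pleasant consistency check: your curve $\mathcal{E}$ is \emph{the same} curve as the paper's $Y^2=X^3-13/3\,X-70/27$ (clear denominators to get $Y^2=(x-7)(x+2)(x+5)$ and substitute $x=3u-2$), so the two arguments converge on an identical final computation. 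Two small bookkeeping items you already flagged and should make explicit in a written version: $\lambda=0$ corresponds to the point at infinity of the conic parametrization (it gives no finite root $X$ of $g$, so nothing to rule out there), and $X=-1$, the other zero of $X(X+1)$, satisfies $g(-1)=-(1+t^2)^2\neq 0$, so it never produces a root either. Your approach has the minor advantage of making the auxiliary curve appear with an explicitly factored right-hand side, which makes the rank-$0$ descent essentially hand-checkable rather than a black-box computation.
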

\begin{proof}
It is well known that a curve has a point of order 3 if and only if the 3-torsion polynomial $\psi_3$ has a root.  For $E_{\alpha,-n^2}$, this is
$$\psi_3(x)=3x^4+4\alpha x^3-6n^2x^2-n^4=0,$$
or equivalently
\begin{equation}
\label{z2z6}
3x^4+4w x^3-6x^2-1=0,
\end{equation}
where $w=\alpha /n.$  In order to have more than one 2-torsion point, we also must have that $\alpha^2+4n^2$ is a square, or equivalently $w^2+4$ is a square.  We may parameterize to find that $w$ can be written $w=\frac{4-j^2}{2j},$ for some rational $j$.  Substituting this back into \eqref{z2z6}, we see that
$$C:3jx^4+8x^3-2j^2x^3-6jx^2-j=0.$$
This is a genus one curve, birationally equivalent to the curve
$$E:Y^2=X^3-13/3X-70/27$$
via the maps
$$(X,Y)=\left(-\frac{2x^4-6jx^3-13x^2-3}{3x^2(x^2+1)}, -\frac{x^4+2jx^3+6x^2+1}{x^3(x^2+1)}   \right),$$
$$(x,j)=\left(-\frac{9Y}{9X^2-15X-14} ,-\frac{216Y}{27X^3+27X^2-135X-175} \right).$$
Using SAGE, we compute the curve $E$ has rank 0, and only the three 2-torsion points (7/3,0),(-2/3,0),(-5/3,0) \cite{Sag}.  Tracing these points back through the substitutions, we get no rational points on the curve $C$ besides $(0,0)$.  Thus, this torsion group is not possible.
\end{proof}

Combining the above series of results, we immediately obtain the following corollary.
\begin{corollary}
Given a cyclic quadrilateral with corresponding elliptic curve $E_{\alpha,-n^2}$, the torsion group must be $\mathbb{Z}/2\mathbb{Z}$, $\mathbb{Z}/6\mathbb{Z}$,  $\mathbb{Z}/2\mathbb{Z} \times \mathbb{Z}/2\mathbb{Z}$, or $\mathbb{Z}/2\mathbb{Z} \times \mathbb{Z}/4\mathbb{Z}$.
\end{corollary}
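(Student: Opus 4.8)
The plan is to treat this purely as an elimination argument over Mazur's list, since all of the genuine mathematical content has already been established in the preceding lemmas and proposition of this section; the corollary itself is just the bookkeeping that collects them. I would begin by invoking Mazur's theorem, which furnishes the fifteen candidate groups: the cyclic groups $\mathbb{Z}/n\mathbb{Z}$ for $n \in \{1,\ldots,10,12\}$ together with $\mathbb{Z}/2\mathbb{Z}\times\mathbb{Z}/2n\mathbb{Z}$ for $1\le n\le 4$. The one structural input coming from the curve itself is that $(0,0)$ is a rational point of order $2$, so the torsion group $T$ has even order. This observation alone eliminates the odd cyclic groups $\mathbb{Z}/1\mathbb{Z}$, $\mathbb{Z}/3\mathbb{Z}$, $\mathbb{Z}/5\mathbb{Z}$, $\mathbb{Z}/7\mathbb{Z}$ and $\mathbb{Z}/9\mathbb{Z}$.

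Next I would dispose of the remaining cyclic candidates. Lemma 4.1, which asserts there is no point $P$ with $2P=(0,0)$, rules out $\mathbb{Z}/4\mathbb{Z}$, $\mathbb{Z}/8\mathbb{Z}$ and $\mathbb{Z}/12\mathbb{Z}$: in each of those groups $(0,0)$ would be the unique point of order $2$, hence would be divisible by $2$ within $T$, contradicting the lemma. Proposition 4.2, which says $E_{\alpha,-n^2}$ has no point of order $5$, then eliminates $\mathbb{Z}/10\mathbb{Z}$, since that group contains an element of order $5$. What survives among the cyclic possibilities is precisely $\mathbb{Z}/2\mathbb{Z}$ and $\mathbb{Z}/6\mathbb{Z}$.

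Finally I would handle the non-cyclic candidates $\mathbb{Z}/2\mathbb{Z}\times\mathbb{Z}/2n\mathbb{Z}$. Here $\mathbb{Z}/2\mathbb{Z}\times\mathbb{Z}/6\mathbb{Z}$ is excluded by Lemma 4.4 and $\mathbb{Z}/2\mathbb{Z}\times\mathbb{Z}/8\mathbb{Z}$ by Lemma 4.3, leaving only $\mathbb{Z}/2\mathbb{Z}\times\mathbb{Z}/2\mathbb{Z}$ and $\mathbb{Z}/2\mathbb{Z}\times\mathbb{Z}/4\mathbb{Z}$. Assembling the survivors from the cyclic and non-cyclic cases yields exactly the four groups $\mathbb{Z}/2\mathbb{Z}$, $\mathbb{Z}/6\mathbb{Z}$, $\mathbb{Z}/2\mathbb{Z}\times\mathbb{Z}/2\mathbb{Z}$ and $\mathbb{Z}/2\mathbb{Z}\times\mathbb{Z}/4\mathbb{Z}$ claimed in the statement.

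There is no real obstacle in this step; the only thing to watch is that each of the fifteen Mazur possibilities is accounted for by exactly one of the preceding results, with no gaps and no double-counting. The single subtlety worth flagging explicitly is that $\mathbb{Z}/10\mathbb{Z}$ must be killed by the order-$5$ proposition rather than by the parity argument, since it has even order and so is not caught by the observation that $(0,0)$ forces $\lvert T\rvert$ to be even. All of the substantive work lives in Lemma 4.1, Proposition 4.2, and Lemmas 4.3 and 4.4.
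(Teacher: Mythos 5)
Your proposal is correct and follows essentially the same route as the paper: invoke Mazur's theorem, use the $2$-torsion point $(0,0)$ to force even order, and then eliminate the remaining candidates via Lemma 4.1 (no $P$ with $2P=(0,0)$, killing $\mathbb{Z}/4\mathbb{Z}$, $\mathbb{Z}/8\mathbb{Z}$, $\mathbb{Z}/12\mathbb{Z}$), Proposition 4.2 (no $5$-torsion, killing $\mathbb{Z}/10\mathbb{Z}$), and Lemmas 4.3 and 4.4 (killing $\mathbb{Z}/2\mathbb{Z}\times\mathbb{Z}/8\mathbb{Z}$ and $\mathbb{Z}/2\mathbb{Z}\times\mathbb{Z}/6\mathbb{Z}$). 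Your version is in fact more explicit than the paper's terse proof, correctly flagging that $\mathbb{Z}/10\mathbb{Z}$ is not caught by the parity observation and must be handled by the order-$5$ result.
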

\begin{proof}
By Mazur's theorem, we have a finite list of possible torsion groups.  We know the torsion group must have order divisible by 2, as the point $(0,0)$ has order 2.  Eliminating the various groups from the previous four lemmas, we have the result.
\end{proof}
We note that all four torsion groups are possible.  As we previously showed, any square will have torsion group $\mathbb{Z}/6\mathbb{Z}$.  For any $m>2$, if we let $a=m^2-4$ and $b=2m$, then the rectangle with side lengths $a$ and $b$ will have torsion group $\mathbb{Z}/2\mathbb{Z} \times \mathbb{Z}/2\mathbb{Z}$.  This follows since the curve is $y^2=x(x+m^4-4m^2)(x-4m^2+16)$, and hence has three points of order 2.  For an example of curves with torsion group  $\mathbb{Z}/2\mathbb{Z} \times \mathbb{Z}/4\mathbb{Z}$, let $\alpha=u^4-6u^2v^2+v^4$ and $n=2uv(u^2-v^2)$.  Then the curve $E_{\alpha,-n^2}$ will have torsion group  $\mathbb{Z}/2\mathbb{Z} \times \mathbb{Z}/4\mathbb{Z}$  with the point $(2uv(u+v)^2,2uv(u^2+v^2)(u+v)^2)$ having order 4.  The most common case for a cyclic quadrilateral is $\mathbb{Z}/2\mathbb{Z}$.

It turns out that if the cyclic quadrilateral is a non-square rectangle, we can rule out two of the possible torsion groups.

\begin{proposition}
The elliptic curve arising from a rectangle has a point of order 3 if and only if the rectangle is actually a square.
\end{proposition}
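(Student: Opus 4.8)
For a rectangle the four consecutive sides are $(P,Q,P,Q)$, so the formulas preceding Theorem \ref{main} give $\alpha=\frac{1}{2}(P^2+Q^2-P^2+Q^2)=Q^2$ and area $n=PQ$; the associated curve is therefore
$$E:\quad y^2=x^3+Q^2x^2-P^2Q^2x,$$
and the rectangle is a square precisely when $P=Q$. The ``if'' direction is then immediate from the theorem above on squares: a square gives torsion group $\mathbb Z/6\mathbb Z$, which contains points of order $3$. So the content lies in the ``only if'' direction, and I would set $k=P/Q$ (a positive rational, with $k=1$ exactly for a square) and assume $E$ has a rational point of order $3$.

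The next step is to turn ``order $3$'' into two rational conditions. A point $(x_0,y_0)$ of order $3$ has $x_0$ a root of the $3$-division polynomial $\psi_3(x)=3x^4+4\alpha x^3-6n^2x^2-n^4$, and, being a rational point, also has $y_0^2=x_0^3+Q^2x_0^2-P^2Q^2x_0$ a rational square. Writing $x_0=Q^2v$ (note $v\neq0$, since $\psi_3(0)=-n^4\neq0$) these become
$$\text{(A)}\quad 3v^4+4v^3-6k^2v^2-k^4=0,\qquad \text{(B)}\quad v(v^2+v-k^2)=\square .$$

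The heart of the argument is to show that (A) and (B) force $k=1$. I would read (A) as a quadratic in $k^2$; since $k$ is rational its discriminant must be a square, i.e. $v(3v+1)=\sigma^2$, and then $k^2=-3v^2+2v\sigma$. Parametrizing this conic by $\sigma=\lambda v$ yields $v=1/(\lambda^2-3)$ and $\sigma=\lambda/(\lambda^2-3)$, whence $k^2=(2\lambda-3)/(\lambda^2-3)^2$. Rationality of $k$ now forces $2\lambda-3$ to be a square, while substituting the parametrization into (B) collapses it to $(\lambda-1)^2/(\lambda^2-3)=\square$, i.e. $\lambda^2-3$ a square. Parametrizing $\lambda^2-3=\square$ as $\lambda=(d^2+3)/(2d)$ and feeding this into $2\lambda-3=\nu^2$ gives, after clearing denominators, $(d\nu)^2=d^3-3d^2+3d=(d-1)^3+1$.

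The last equation is the classical Mordell curve $\eta^2=D^3+1$ with $D=d-1$ and $\eta=d\nu$, and this is the step I expect to be the crux. Its only rational points are $D\in\{-1,0,2\}$ together with the point at infinity, which I would record by citing the known rational points or by a one-line SAGE computation as elsewhere in the paper. The values $D=-1$ and $D=\infty$ are degenerate (they force $d=0$, where $\lambda$ is undefined), whereas $D=0$ and $D=2$ both yield $\lambda=2$, hence $k^2=1$ and $P=Q$. Thus every rational point of order $3$ forces the rectangle to be a square, completing the proof. The only bookkeeping along the way is to verify that the sign ambiguities in extracting $\sigma$ and $k^2$ are all absorbed by letting $\lambda$ range over $\mathbb Q$, and that $k^2>0$ (true at $\lambda=2$); neither causes difficulty.
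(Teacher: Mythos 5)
Your proof is correct, and at the top level it mirrors the paper's strategy: both arguments extract from a hypothetical rational $3$-torsion point three simultaneous squareness conditions --- one from solving the $3$-division polynomial for $C=k^2$ (your discriminant condition $v(3v+1)=\sigma^2$ is literally the paper's requirement that $3x^2+x$ be a square, after its normalization of the curve to $y^2=x^3+x^2-Cx$), one from the rationality of the $y$-coordinate, and one from $C=(P/Q)^2$ being itself a square --- and then descend to a rank-$0$ elliptic curve. Where you genuinely diverge is in the bookkeeping and in the final curve reached. The paper parametrizes ``$3x^2+x$ is a square'' by $x=(m-2)^2/(m^2-3)$, which splits the problem into two branches $C_1,C_2$, and after a second parametrization it ends with two separate curves $z_1^2=(t-2)(t^2-7t+13)$ and $z_2^2=-(t-2)(t^2-t+1)$, whose rank-$0$ status is verified by SAGE. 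Your parametrization $\sigma=\lambda v$ of the discriminant conic absorbs both branches into the sign of $\lambda$, and your two residual conditions ($2\lambda-3$ and $\lambda^2-3$ both squares) fuse into the single classical Mordell curve $\eta^2=D^3+1$, whose complete set of rational points $(-1,0),(0,\pm1),(2,\pm3),\infty$ has been known unconditionally since Euler --- so your endgame needs no machine verification and is, if anything, tidier. The loose ends you flag are genuinely harmless: $\lambda=1$ (where condition (B) is vacuously a square) is already excluded because it forces $k^2=-1/4<0$, and the point at infinity on the Mordell curve corresponds to no finite value of $d$ rather than to $d=0$.
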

\begin{proof}
By Theorem 3.4, if we begin with a square, the torsion group is $\mathbb{Z}/6\mathbb{Z}$.  For the converse, suppose we have a curve (arising from a rectangle) which has a point of order 3.  The curve equation is $y^2=x^3+b^2x^2-a^2b^2x$.  Under the isomorphism $(x,y) \to (b^2x ,b^3 y)$, this is the curve $y^2=x^3+x^2-Cx$, where $C=(a/b)^2$.  The three-torsion polynomial for this curve is
$$\Psi_3=3x^4+4x^3-6Cx^2-C^2,$$
and so under our assumption of having a point of order 3, there exists a rational $x$ satisfying $\Psi_3(x)=0$.
Solving for $C$, in terms of $x$, we find
$$C=x\left(-3x \pm 2\sqrt{3x^2+x}  \right).$$
We can parameterize rational solutions of $3x^2+x$ being a square by $x=(m-2)^2/(m^2-3)$.  This makes $C$ either
$$C_1=-\frac{(7m-12)(m-2)^3}{(m^2-3)^2},$$
or
$$C_2=\frac{m(m-2)^3}{(m^2-3)^2}.$$
Now, substituting these values into $x^3+x^2-Cx$, we obtain that both $(3m-5)^2(m-2)^4/(m^2-3)^3$ and $(m-1)^2(m-2)^4/(m^2-3)^3$ must be squares, or equivalently $m^2-3$ must be a square.  We can parameterize the rational solutions of $m^2-3$ being square by
$$m=2\frac{t^2-t+1}{t^2-1}.$$
We substitute this value of $m$ in for $ C_1,$ and $C_2$, obtaining
$$C_1=16\frac{(t^2-7t+13)(t-2)^3}{(t^2-4t+1)^4},$$
$$C_2=-16\frac{(t-2)^3(t^2-t+1)}{(t^2-4t+1)^4}.$$
Recall that $C=(a/b)^2$ is a square, and hence both $C_1$ and $C_2$ must be as well, leading to the equations
$$z_1^2=(t-2)(t^2-7t+13),$$
$$z_2^2=-(t-2)(t^2-t+1).$$
These are both elliptic curves, for which it can be checked that each has rank 0, and exactly 6 torsion points \cite{Sag}.  These are $(t,z_1)=(2,0),(3,\pm 1),(5,\pm 3)$ and $(t,z_2)=(2,0),(1,\pm 1),(-1,\pm 3)$.  Substituting these values of $t$ in and calculating $C_1$ and $C_2$, we find that they lead to $C_1=1$ or 0, and the same holds true for $C_2$.  Thus $C=0$ or $1$, which means that $a/b=0$ or $a/b=1$.  As we are assuming $ab\neq 0$, then $a=b$ and we have a square.
\end{proof}

\begin{corollary}
Given a non-square rectangle with sides $a$ and $b$, then $T=\mathbb{Z}/2\mathbb{Z}$ or  $T=\mathbb{Z}/2\mathbb{Z} \times \mathbb{Z}/2\mathbb{Z}$.  We get the latter group if and only if $4a^2+b^2$ is a square.
\end{corollary}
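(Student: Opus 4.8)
The plan is to first make the curve completely explicit and then whittle down the admissible torsion groups. For a rectangle the consecutive sides are $(a,b,a,b)$, so the construction of Section 3 gives $\alpha=(a^2+b^2-a^2+b^2)/2=b^2$ and area $n=ab$; hence the curve is $E_{b^2,-a^2b^2}:y^2=x(x^2+b^2x-a^2b^2)$, exactly as in the preceding proposition. By the torsion classification corollary, $T$ lies in $\{\mathbb{Z}/2\mathbb{Z},\,\mathbb{Z}/6\mathbb{Z},\,\mathbb{Z}/2\mathbb{Z}\times\mathbb{Z}/2\mathbb{Z},\,\mathbb{Z}/2\mathbb{Z}\times\mathbb{Z}/4\mathbb{Z}\}$, and since the rectangle is not a square the preceding proposition says there is no rational $3$-torsion, eliminating $\mathbb{Z}/6\mathbb{Z}$. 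The number of rational $2$-torsion points is governed by whether $x^2+b^2x-a^2b^2$ splits over $\mathbb{Q}$, i.e. by whether its discriminant $b^2(4a^2+b^2)$ is a square, which happens exactly when $4a^2+b^2$ is a square. Thus the entire statement reduces to showing that the remaining possibility $\mathbb{Z}/2\mathbb{Z}\times\mathbb{Z}/4\mathbb{Z}$ cannot occur for a genuine (non-degenerate) rectangle.

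To rule out a point of order $4$, I would argue as in Lemma 4.3. Assuming three $2$-torsion points, write $x^2+b^2x-a^2b^2=(x+M)(x+N)$ with $M=\tfrac12(b^2+r)$, $N=\tfrac12(b^2-r)$ and $r=\sqrt{b^4+4a^2b^2}>0$; since $r>b^2$ we have $M>0>N$. By Lemma 4.1 a $4$-torsion point cannot double to $(0,0)$, and in Ono's criterion (as invoked in Lemma 4.3) cases (i) and (ii) are impossible by sign, since they would require $N\geq 0$, respectively $-M\geq 0$. Hence only case (iii) can occur, forcing both $-N$ and $M-N=r$ to be rational squares. After scaling so that $b=1$ (which preserves $T$ and the condition ``$4a^2+b^2$ is a square'', the latter being homogeneous of degree $2$), these requirements read $r=\sqrt{4a^2+1}$ a square and $-N=\tfrac12(r-1)$ a square.

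Setting $r=p^2$ and $\tfrac12(r-1)=q^2$ gives $p^2=2q^2+1$, while $r^2=4a^2+1$ yields $a^2=q^2(q^2+1)$, so $q^2+1$ must also be a square. I would then parametrize the conic $q^2+1=u^2$ by $q=(s^2-1)/(2s)$ and substitute into $2q^2+1=\square$; a short computation turns this into $(s^4+1)/(2s^2)=\square$, equivalently $s^4+1=2W^2$. This last equation is the main obstacle, but it is the classical Fermat-descent equation $X^4+Y^4=2Z^2$, whose only rational solutions satisfy $X^2=Y^2$; here that forces $s=\pm1$, hence $q=0$ and $a=0$, a degenerate rectangle. (Equivalently, one may map the system $\{q^2+1=\square,\,2q^2+1=\square\}$ to a Weierstrass model and verify in SAGE that it has rank $0$ with only the torsion points corresponding to $q=0$, in the same spirit as the other lemmas of this section.) Therefore $\mathbb{Z}/2\mathbb{Z}\times\mathbb{Z}/4\mathbb{Z}$ is impossible, leaving $T=\mathbb{Z}/2\mathbb{Z}$ when $4a^2+b^2$ is not a square and $T=\mathbb{Z}/2\mathbb{Z}\times\mathbb{Z}/2\mathbb{Z}$ when it is, which is exactly the claim.
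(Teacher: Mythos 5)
Your argument is correct and follows essentially the same route as the paper: the torsion classification plus the order-3 proposition reduce everything to excluding $\mathbb{Z}/2\mathbb{Z}\times\mathbb{Z}/4\mathbb{Z}$, which both you and the paper do via Ono's theorem, dispatching cases (i) and (ii) by sign considerations and case (iii) by a classical Fermat quartic. The only (cosmetic) difference is the endgame: the paper uses just the single condition that $M-N$ is a square, which already gives $(2a)^2=p^4-b^4$ and hence $a=0$ from $x^4-y^4=z^2$, whereas you combine both conditions of case (iii) and descend on $s^4+1=2W^2$ --- a slightly longer but equally valid path.
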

\begin{proof}
The only torsion group we need to show is not possible is $T=\mathbb{Z}/2\mathbb{Z} \times \mathbb{Z}/4\mathbb{Z}$.  Assume that $E_{\alpha,-n^2}(\mathbb{Q})_{tors} $ contains $T$. As there are three points of order 2, we must necessarily have $4a^2+b^2$ is equal to a square, say $r^2$.  Then a theorem of Ono [\cite{Ono}, Main Theorem 1], implies that \\
$i)$ $b(\frac{-b+ r}{2})$ and $b(\frac{-b- r}{2})$ are both squares, or\\
$ii)$ $-b(\frac{-b+r}{2})$ and $-br$ are both squares, or\\
$iii)$ $-b(\frac{-b-r}{2})$ and $br$ are both squares.\\
For $(i)$, $$(\frac{-b+ r}{2})b=r_1^2,\qquad (\frac{-b- r}{2})b=r_2^2.$$
  Therefore we have $-b^2=r_1^2+r_2^2$ which is contradiction.
For $ii)$, as $b>0$ and we can take $r>0$, then $-br$ is not a square.  For $iii)$, if $br=b\sqrt{b^2+4a^2}=s^2$, then $b^4+4a^2b^2=s^4$.  We may rewrite this as $(2a/b)^2=(s/b)^4-1$, since $b>0$. The only rational points on the curve are $(\pm 1,0)$, and as we can assume $s>0$, then it must be that $s/b=1$.  However, if $s=b$, then we have $4a^2b^2=0$, a contradiction.

The corollary now follows immediately from the previous results in this section.
\end{proof}

\section{Congruent numbers }
Recall a congruent number is an integer $n$ which is the area of a right triangle with rational sides.  It is well known that $n$ is congruent if and only if the elliptic curve $y^2=x^3-n^2x$ has a rational point $P$, which is not of order 2.  These congruent curves are a subset of our curves $E_{\alpha,-n^2}$, with $\alpha=0$.  If the point $P$ has infinite order, then by the main result of this paper, we can construct a cyclic quadrilateral with area $n$, and side lengths $(a,b,c,d)$ such that $a^2+b^2+d^2=c^2$.  Note by setting $d=0$, we get the correspondence between congruent numbers and elliptic curves just mentioned.  In this sense, Theorem \ref{main} provides a generalization of this congruent number -- elliptic curve connection.  This gives us the following corollary.

\begin{corollary}
An integer $n$ is congruent if and only if there is a cyclic quadrilateral with area $n$, and rational side lengths $(a,b,c,d)$ with $a^2+b^2+d^2=c^2$.
\end{corollary}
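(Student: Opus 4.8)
The plan is to obtain this corollary as the $\alpha=0$ specialization of Theorem \ref{main}, combined with the classical congruent number criterion recalled at the start of this section. The key algebraic observation is that the defining parameter of our family, $\alpha=(a^2+b^2-c^2+d^2)/2$, vanishes exactly when $a^2+b^2+d^2=c^2$. Thus the congruent number curves $E_{0,-n^2}=E_n$ are precisely the curves $E_{\alpha,-n^2}$ attached to quadrilaterals satisfying this Pythagorean-type relation, and the whole statement reduces to translating between ``$\alpha=0$'' and ``$a^2+b^2+d^2=c^2$''.

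First I would record that, since the torsion subgroup of $E_n:y^2=x^3-n^2x$ consists only of the identity together with the three $2$-torsion points $(0,0),(\pm n,0)$, any rational point on $E_n$ that is not of order $2$ is automatically of infinite order. Hence the criterion ``$n$ is congruent iff $E_n$ has a rational point not of order $2$'' is equivalent to ``$n$ is congruent iff $E_{0,-n^2}$ has positive rank.'' This is the form in which I will feed the criterion into Theorem \ref{main}.

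For the backward implication, suppose a cyclic quadrilateral with area $n$ and rational sides $(a,b,c,d)$ satisfies $a^2+b^2+d^2=c^2$. Then $\alpha=0$, so its associated curve is $E_{0,-n^2}=E_n$, and the forward half of Theorem \ref{main} produces two rational points on $E_n$, neither of order $2$; by the criterion, $n$ is congruent. For the forward implication, suppose $n$ is congruent. By the criterion in its rank form, $E_{0,-n^2}$ has positive rank, so the converse half of Theorem \ref{main}, applied with $\alpha=0$, yields a cyclic quadrilateral with rational sides and area $n$; since the underlying curve has $\alpha=0$, its sides satisfy $a^2+b^2+d^2=c^2$.

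I expect no serious obstacle, as the content is carried entirely by Theorem \ref{main}. The only point needing a moment's care is the equivalence of ``point not of order $2$'' with ``infinite order'', i.e. that $E_n$ has no rational torsion beyond its full $2$-torsion; this is the standard fact underlying the congruent number criterion and may simply be invoked. A minor bookkeeping check is that the quadrilateral produced by the converse of Theorem \ref{main} genuinely inherits $\alpha=0$, which it does since $\alpha$ is an input to that construction.
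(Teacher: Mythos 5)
Your proposal is correct and follows essentially the same route as the paper: the paper's justification (given in the paragraph preceding the corollary rather than in a formal proof environment) likewise combines the classical criterion ``$n$ is congruent iff $E_n:y^2=x^3-n^2x$ has a rational point not of order $2$'' with both directions of Theorem~\ref{main} specialized to $\alpha=(a^2+b^2-c^2+d^2)/2=0$. Your explicit remark that a non-$2$-torsion point on $E_n$ must have infinite order (since the torsion of $E_n$ is exactly its full $2$-torsion) is the same fact the paper uses implicitly, so there is nothing to add.
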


This characterization of congruent numbers can be added to the list of the many other known characterizations of congruent numbers. Several of these are given in Koblitz's book \cite{Koblitz}.  Given an integer $n$, it is a well-known open problem to determine whether or not $n$ is congruent. A partial answer is given by Tunnell's theorem, which gives an easily testable criterion for determining if a number is congruent.  However, this result relies on the unproven Birch and Swinnerton-Dyer Conjecture for curves of the form $y^2=x^3-n^2x$.   The criterion involves counting the number of integral solutions $(x,y,z)$ to a few Diophantine equations of the form $ax^2+by^2+cz^2=n$.  See \cite{Tunnell} for more details.

In the remainder of this section, we give infinite families of congruent number elliptic curves with (at least) rank three.  Searching for families of congruent curves with high rank has been done before \cite{Duj5},\cite{Duj6},\cite{Rog},\cite{Rub},\cite{Wat}.  Currently, the best known results are a few infinite families with rank at least 3 \cite{Spe3},\cite{Rub}, and several individual curves with rank 7 \cite{Wat}.


\subsection{A family of congruent number elliptic curves with rank at least 3.}
In order for $n$ to be congruent, we need $\alpha=a^2+b^2-c^2+d^2=0$.  It is known (\cite{And}, Page 79), that we can parameterize the solutions of $a^2+b^2-c^2+d^2=0$ by $a=p^2+q^2-r^2$, $b=2pr$, $c=p^2+q^2+r^2$, and $d=2qr$.  By scaling, we can assume that $r=1$.  The condition that the quadrilateral have rational area is then that
$$(p+q+1)(p^2+q^2-p+q)(p+q-1)(p^2+q^2+p-q)$$
is a square.
To simplify a bit, we require both
$$(p+q-1)(p+q+1)=(p+q)^2-1$$
$$(p^2+q^2-p+q)(p^2+q^2+p-q)=(p^2+q^2)^2-(p-q)^2$$
to be squares.
From the first of these equations, we can parameterize the solutions by $p+q=\frac{1}{2}\frac{z^2+1}{z}.$  We note the bottom expression will be a square if $p=q$, thus we have $p=q=\frac{z^2+1}{4z}$.  We can scale the resulting sides by $4\frac{z^2}{z^2+1}$ so that the area of the resulting quadrilateral is $n=z(z-1)(z+1).$  From our correspondence, we have the following points on the curve $y^2=x^3-n^2x$:
$$P_1=\left(\frac{1}{4}(z^2+1)^2,\frac{1}{8}(z^2+1)(z^2+2z-1)(z^2-2z-1)  \right),$$
$$P_2=\left(-z(z-1)^2,2z^2(z-1)^2  \right).$$
It can be checked that $P_1=-2P_2$, hence we only have a rank 1 family.   A natural approach to finding more rational points on this curve is to look for factors $B$ of $n(z)$ such that $B-n(z)^2/B$ is square.  Note that if $x_3=2z^2(z+1)$, then $x_3-n^2/x_3=\frac{1}{2}(3z-1)(z+1)^2$.  Thus we can set $z=\frac{1}{3}(2t^2+1)$ to obtain a square.  It can be checked (by specializing) that our new point with $x$-coordinate $2z^2(z+1)=\frac{4}{27}(t^2+2)(2t^2+1)$ is linearly independent from $P_1$ (or $P_2$).

We now repeat the process, with (after scaling)
$$n=3(t-1)(t+1)(t^2+2)(2t^2+1).$$
With $x_4=-3(t-1)(t+1)(t^2+2)^2$, then we will obtain another rational point if $t^2+1$ is a square.  We again parameterize by setting $t=\frac{w^2-1}{2w}$.  Checking the factors of $n$, we did not find any new independent points on the curve.

We summarize the above results.  The cyclic quadrilateral with side lengths
$$a=\frac{1}{2}\frac{(w^8-12w^6-34w^4-12w^2+1)(w^8+12w^6-34w^4+12w^2+1}{w(w^8+38w^4+1)},$$
$$b=d=12w(w^4+1),$$
$$c=\frac{1}{2}\frac{w^{16}+364w^{12}+2022w^8+364w^4+1}{w(w^8+38w^4+1)},$$
has area
$$n=6(w^2+2w-1)(w^2-2w-1)(w^4+1)(w^4+6w^2+1).$$
The resulting congruent curve $y^2=x^3-n^2x$ has three independent points with the following $x$-coordinates
$$x_1=-6(w^4+1)(w^2+2w-1)^2(w^2-2w-1)^2,$$
$$x_2=\frac{3}{16}\frac{(w^4+1)^2(w^4+6w^2+1)}{w^6},$$
$$x_3=-\frac{3}{64}\frac{(w^2+2w-1)(w^2-2w-1)(w^4+6w^2+1)^2}{w^6}.$$
The points being independent can be checked by specialization -- for instance, when $w=2$, the height pairing matrix has determinant 43.6831845338168 as computed by SAGE.
This family has been previously discovered in \cite{Rub}.  We also note that for $w=14/9$ (or $w=5/23,23/5,9/14$ which all yield the same curve) we obtain a rank 6 curve.

\subsection{Other families of rank 3 congruent number curves}
We can find other families with rank 3 using the same technique illustrated in the previous subsection.  For example, instead of selecting the particular $x_4$, if we had instead chosen
$x_4=6(t-1)(t+1)(2t^2+1)^2$, then $x_4$ will lead to a rational point if $10t^4-2t^2-8$ is a square.  The equation $C:s^2=10t^4-2t^2-8$ has the rational point $(2,12)$, and hence is an elliptic curve.  There is a birational transformation from $C$ to the curve $E:y^2=x^3+58x^2+1440x+12960$, given by $t=-(x+36)/x, s=36y/x^2$.  The curve $E$ has rank 1, with generator $P=(-12,48)$.  As we have an infinite number of points on $E$, then we get an infinite number of congruent number curves with 3 independent points.  Specifically, given $(x,y)$ on $C$, let $t=-(x+36)/x$, then $x_4$ as defined above will give a rational point on the congruent number curve.

If we start with other parameterizations for $(a,b,c,d)$, it is not hard to find other families of congruent number elliptic curves with rank (at least) 3 using the same techniques.  As a final example, let
$$a=(t+1)(t-1)(1+5t+t^2)(1-5t+t^2),$$
$$b=-\frac{1}{3}(-2+t))(-1+2t)(1+2t)(2+t)(t-1)(t+1),$$
$$c=-\frac{13-61t^2+177t^4-61t^6+13t^8}{3(t-1)(t+1)},$$
$$d=-\frac{(1+t+t^2)(1-t+t^2)(2+t)(1+2t)(-1+2t)(-2+t)}{(t-1)(t+1)}.$$
The area of the cyclic quadrilateral is then
$$n=2t(2+t)(1+2t)(-1+2t)(-2+t)(t^2+1)(t^4+7t^2+1).$$
The points arising from the $x$-coordinates
$$x_1=(1+t^2)^2(t^4+7t^2+1)^2$$
$$x_2=-(9(t+2))(1+2t)(1-2t)(2-t)t^2(1+t^2)^2$$
are linearly independent.  If we set
$$x_3=-2t(1-2t)^2(2-t)^2(1+t^2)(t^4+7t^2+1)$$
then this will be a point provided that $5t^4+35t^2+5$ is a square.  This is birationally equivalent to the elliptic curve $E:y^2-20xy-1200y=x^3+55x^2-4500x-247500$, which has rank 1.  Specifically, given a point $(x,y)$, let $t=(30x+1650)/y-2$, leading us to obtain a third point.  Specializing, we see we get an infinite family of rank 3 congruent number curves, arising from the infinite number of points on $E$.

Both of the examples given in this subsection are new, meaning they have not appeared in the literature before.  We find it not too difficult to generate a large number of rank 2 families, which will have a third independent point arising from an associated elliptic curve with positive rank.  We did not find any unconditional rank 3 family besides the one given in the previous subsection, nor did we find a family with rank 4.  These families could be useful in a new search for congruent curves with high rank.  The search performed in \cite{Wat} used a rank 2 family, and it is possible that new families could lead to more congruent curves with high rank.

\section{High rank curves with torsion group $\mathbb{Z}/2\mathbb{Z}$}
In this section, we look for infinite families of curves corresponding to cyclic quadrilaterals with high rank, as well as specific curves in these families with high rank.  The family of curves $E_{\alpha,-n^2}$ is a subset of the more general family of elliptic curves with a 2-torsion point.  Studying families of elliptic curves with torsion group $\mathbb{Z}/2\mathbb{Z}$ with high rank has been of much interest, as described in the introduction. The highest known rank for a curve with $T=\mathbb Z/2\mathbb Z$ is 19, due to Elkies \cite{Duj3}.  For infinite families, Fermigier found some with rank at least 8.  We use our correspondence to find an infinite family with rank 5, and specific curves with ranks as high as 10 (see Table 1 in Section 7).

\subsection{An infinite family with rank at least 4}

A quadrilateral with side lengths $(a,c+2,c,a+2)$ will have rational area if $a(a+2)c(c+2)$ is square.  We parameterize the solutions of $a(a+2)$ being a square by $a=-u^2/(2u-2)$, and similarly set $c=v^2/(2v+2)$.  By using the same technique as described in Section 5.1, we find that $x_3=u^2v(u-1)(v+1)(v+2)$ will be the $x$-coordinate of a rational point if $v=-\frac{1}{8}\frac{u^4-4u^3+20u^2-32u+16-z^2}{(u-1)^2}.$  We clear denominators, and repeat the procedure to obtain the following family of rank (at least) 4.

Set
$$ \alpha=u^4w^2+4w^4-8w^4u+4w^4u^2+8w^2-16w^2u+12w^2u^2+4-8u+4u^2-4w^2u^3,$$
$$ n^2=(w^4-1)^2u^2(u-2)^2(u-1)^2.$$
 Let the curve $E_{u,w}$ be defined with this value of $\alpha$ and $n^2$.  Then
 $$ x_1=-(w-1)^2(w+1)^2u^2(u-1),$$
 $$x_2=-u^2w^2(u-2)^2,$$
 $$ x_3=-4(w^2+1)^2(u-1)^2,$$
are all rational $x$-coordinates of points on $E_{u,w}$.  If we further set
$w=(m^2+2(u-1)(u^2-2u+4)m+8(u-1)^2)/((u-1)m^2-8(u-1)^3)$, then
 $$x_4=(w^4-1)u^2(u-1)$$
will also be a rational $x$-coordinate of a point on the curve. We performed a computer search to find elliptic curves in the family $E_{u,w}$ with high rank, and found hundreds of curves with rank 9 and several curves with rank 10 (see Table 1).  We remark that other families could be similarly constructed if we begin with different side lengths.  Also note that the curves $E_{u,-w}$ and $E_{u,1/w}$ both are equivalent to the curve $E_{u,w}$.  


\subsection{A family  with rank at least 5}
We conclude this section with a subfamily with rank 5.  
We search for a fifth point $P_5$, which will be rational if a certain quartic equation in $m$ is a square.
Specifically, let $x_5=-x_4=-u^2(u-1)(w^4-1)$.  To yield a rational point, we need that  
the quartic (in $m$)
$$\begin{aligned}
(3&u^2-6u+4)^2m^4+4(u-1)(u^2-2u+4)(u^4-4u^3+12u^2-16u+8)m^3\\
&+4(u-1)^2(u^8-8u^7+40u^6-128u^5+268u^4-368u^3+400u^2-320u+128)m^2\\
&+32(u-1)^3(u^2-2u+4)(u^4-4u^3+12u^2-16u+8)m+64(u-1)^4(3u^2-6u+4)^2, 
\end{aligned}$$
is square.
As the coefficient of $m^4$ is square, we may use a technique attributed to Fermat (\cite{Dic}, p. 639) to solve for $m$ in terms of $u$ so that the resulting equation is square.  A short calculation finds
$$m=-\frac{4(u-1)(u^8-8u^7+34u^6-92u^5+178u^4-248u^3+232u^2-128u+32)}{(u^2-2u+4)(3u^2-6u+4)^2}.$$
Thus, we have a parameterized family with 5 rational points.  Specializing (at $u=3$ for example), shows the five points are linearly independent, and hence the rank of this family is at least five.

We performed a computer search for high rank curves in the rank 5 family, but the search was not nearly as successful as for the $E_{u,w}$ family since the size of the coefficients were so large.  Note that this rank 5 family is a subset of the $E_{u,w}$ family anyhow.

\section{Examples and Data}
Our starting point for examples of high rank curve is the family of elliptic curves with rank at least 4 from Section
6.1. We use the sieving method based on Mestre-Nagao sums
$$S(N,E) =\sum_{p\leq N, p     \medspace  prime}(1-\frac{p-1}{ |
E(\mathbb{F}_p)|})\log (p).$$
(see \cite{Mes,Na}). For curves with large values of $S(N, E)$, 
we compute the Selmer rank, which is a well-known upper bound for the rank. Specifically we searched for curves $E$ that satisfied the bounds $S(523,E)>20$ and $S(1979,E)>28$.  We
combine this information with the conjectural parity for the rank.\\
Finally, we try to compute the rank and find generators for the best candidates
for large rank. We have implemented this procedure in SAGE \cite{Sag} and PARI \cite{Pari}, using Cremona's
program \texttt{mwrank} \cite{Mw}, for the computation of rank and Selmer rank.  
In the following table, we present examples of the curves we found with rank 10.   

Finally, in Table 2 we present examples of cyclic quadrilaterals with rational area $n$ and associated elliptic curves for positive integers $n$ up to 50.  For a given $n$, there are many cyclic quadrilaterals that we could use, however we chose ones which had numerators and denominators relatively small.  Note the rank for all these curves is at least 2.\\

\begin{table}[h]
\caption{High rank curves in the family $E_{u,w}$ from Section 6.1}  
\centering          
\begin{tabular}{c c c }    
\hline\hline                        
$u$ &$w$ & rank \\ [0.5ex]  
\hline                      
-84/11 & 29/14 & 10 \\
-63/22 & 97/5 & 10 \\
-62/81 & 32/9 & 10 \\
-60/77 & 22/3 & 10 \\
-53/77 & 31/5 & 10 \\
-47/27 & 45/7 & 10 \\
-32/77 & 49/25 & 10 \\
7/11 & 3161/4679 & 10 \\
9/25 & 6091/19600 & 10  \\        
63/85 & 5/97 & 10 \\
\hline          
\end{tabular}
\label{table:nonlin}    
\end{table}

\begin{table}[!htb]
\centering
\caption { Transformation from $E_{\alpha,-n^2}$ to cyclic quadrilateral.}
\label{Tab:1}
\small
{
\begin{tabular}{c  c  c c }
\hline \hline
$n$&$\alpha$&rank&$[a,b,c,d]$\\
\hline
1&5/2& 2& [5/6, 1, 5/6, 2]\\
\hline
2&1/9& 2& [1/3, 4/3, 8/3, 7/3]\\
\hline
3&3& 2 &[3, 1/2, 4, 3/2]\\
\hline
4&10& 2 &[5/3, 2, 5/3, 4]\\
\hline
5&1/9& 2 &[1/3, 7/3, 13/3, 11/3]\\
\hline
6&9/40&3&[1,12/5, 2813/680, 447/136]\\
\hline
7& -7/18 & 2 &[4/3, 7/2, 9/2, 7/3]\\
\hline
8&1/4 &2& [1/2, 7/2, 31/6, 23/6]\\
\hline
9&-5/4&2& [2, 5/2, 5, 7/2]\\
\hline
10&1& 2& [1, 4, 16/3, 11/3]\\
\hline
11&1/4& 2& [1/2, 13/6, 127/15, 41/5]\\
\hline
12 &46 & 2 & [1,6,3,8]\\
\hline
13&829/4& 2& [1/4, 16, 13/4, 13]\\
\hline
14&1/4 &2 &[1/2, 2, 34/3, 67/6]\\
\hline
15&15& 2 &[4, 3/2, 5, 15/2]\\
\hline
16 & 40 & 2 & [10/3, 4, 10/3, 8]\\
\hline
17&4 &2& [2, 106/39, 8017/1092, 199/28]\\
\hline
18&1& 2 &[1, 4, 8, 7]\\
\hline
19&1 &2 &[1, 17/2, 91/10, 17/5]\\
\hline
20&7 &2 &[4, 7, 22/3, 5/3]\\
\hline
21&1/4& 2 &[1/2, 15/2, 9, 5]\\
\hline
22& -149/3 & 2 &[3,20/3,40/3,5]\\
\hline
23&4 &3 &[2/5, 2, 383/20, 77/4]\\
\hline
24&9/10& 3& [2, 24/5, 2813/340, 447/68]\\
\hline
25&4/9& 2 &[2/3, 58/3, 233/12, 23/12]\\
\hline
26&64&2&[8, 5, 7/3, 20/3]\\
\hline
27&8/5 &2 &[12/5, 176/35, 597/70, 67/10]\\
\hline
28&-14/9& 2 &[8/3, 7, 9, 14/3]\\
\hline
29&-1/5& 2& [437/120, 569/56, 682639/62160, 1139/592]\\
\hline
30&4& 2 &[2, 6, 9, 7]\\
\hline
31&4/9 & 2 &[2/3, 1069/24, 32427/728, 66/91]\\
\hline
32&1 &2& [1, 7, 31/3, 23/3]\\
\hline
33&-7/4 &2 &[2, 41/10, 58/5, 21/2]\\
\hline
34&2 &2 &[19/6, 311/36, 352771/35460, 25291/5910]\\
\hline
35&385/9 &2& [7/3, 35/3, 9, 5]\\
\hline
36&-5& 2& [4, 5, 10, 7]\\
\hline
37&-4/3& 2& [5/6, 9/2, 175/12, 55/4]\\
\hline
38&4/9 &2 &[2/3, 17/3, 197/15, 178/15]\\
\hline
39&333/2 &2& [9/2, 15, 7/2, 10]\\
\hline
40&-3 &2 &[7/3, 6, 34/3, 9]\\
\hline
41&4/9& 3 &[2/3, 98/15, 1427/110, 247/22]\\
\hline
42&1 &3 &[1,9,12,8]\\
\hline
43&-8/9& 3 &[104/15, 32/3, 5387/420, 79/84]\\
\hline
44&-2/3 &3 &[10/21, 5, 353/21, 16]\\
\hline
45&1&2& [1,7,13,11]\\
\hline
46&1/4 &2 &[1/2, 65/2, 2867/88, 205/88]\\
\hline
47&4 &2 &[2/7, 2, 1727/42, 247/6]\\
\hline
48&184 & 2& [2, 16, 6, 12]\\
\hline
49&245/6&2&[35/6, 7, 35/6, 14]\\
\hline
50&25/9 &2 &[5/3, 20/3, 40/3, 35/3]\\
\bottomrule
\end{tabular}
}
\end{table}

\clearpage

{{\bf Acknowledgements:} The authors would like to thank Peter Mueller for the idea behind the proof of Proposition 4.2.}

\bibliographystyle{amsplain}

\begin{thebibliography}{10}
\bibitem {Ag} J. Aguirre, A. Castaneda, J. C. Peral, {\it High rank elliptic curves with torsion group ${\mathbb{Z}}/{2\mathbb{Z}}$}, Math. Comp., \textbf{73} (2003),  no. 245, 323-331.
\bibitem{ARC} J. Aguirre, A. Lozano-Robledo and J.C. Carlos, {\it Elliptic curves of maximal
rank}, to appear in Rev. Mat. Iberoamericana.
\bibitem{AP} J. Aguirre, J. C. Peral, {\it Elliptic Curves and Biquadrates}, Glas. Mat. Ser. III, \textbf{48} (2013), no. 68, 49-58.
\bibitem{A-N} C. Alsina, and R. B. Nelsen, \textit{On the Diagonals of a Cyclic Quadrilateral},
 Forum Geom., {7} (2007), 147--149.
 \bibitem {And} T. Andreescu, D. Andrica, I. Cucurezeanu, \textit {An Introduction to Diophantine Equations: A Problem-Based Approach  }, Springer-Verlag, New York, 2010.
 \bibitem{magma}  W. Bosma, J. Cannon, C. Playoust, \textit{The Magma algebra system. I. The user language},  J. Symbolic
Comput., \textbf{24} (1997), no. 3-4, 235-265. Online at \url{http://magma.maths.usyd.edu.au}.
 \bibitem {Br} A. Bremner,  and J. W. S. Cassels,  {\it On the equation $Y^2=X(X^2+p)$}, Math. Comp., \textbf{42} (1984), 257- 264.
\bibitem{Buc-Mac1} R.H. Buchholz, and J.A. Macdougall,
        \textit {Heron quadrilaterals with sides in sides in arithmetic or geometric progression},
        Bull. Austral. Math. Soc., \textbf{59} (1999), no. 2, 263-269.
\bibitem{Cam} G. Campbell, Finding Elliptic Curves and Families of Elliptic Curves over $\mathbb{Q}$ of Large
Rank , PhD Thesis, Rutgers University (1999).
\bibitem{CAH}  J. S. Chahal, \textit{Congruent numbers and elliptic curves}. Amer. Math. Monthly, \textbf{113} (2006), no. 4, 308-317.
 \bibitem{COA}  J. H. Coates, {\it Congruent Number Problem}, Pure Appl. Math. Q., \textbf{1} (2005), no. 1,  14-27.
\bibitem{CMG} H. S. M. Coxeter, Samuel L. Greitzer, {\it Cyclic Quadrangles; Brahmagupta's formula}, in Geometry Revisited, Math. Assoc. of Amer., (1967), 57-60.
 \bibitem{Mw}J. Cremona, Algorithms for Modular Elliptic Curves, Cambridge University Press, Cambridge, 1997. 
\bibitem{Dai} L. Daia, \textit{On a conjecture}, Gaz. Math., \textbf{89} (1984), 276-279.
\bibitem {Dic} L. E. Dickson, History of the theory of numbers II, Chelsea publishing company, New York, 1971.
\bibitem{Duj3} A. Dujella,  http://web.math.pmf.unizg.hr/~duje/tors/tors.html.
\bibitem{Duj4} A. Dujella, http://web.math.pmf.unizg.hr/~duje/tors/generic.html.
\bibitem{Duj5} A. Dujella, A. S. Janfada and S. Salami, {\it A search for high rank congruent number elliptic curves}, J. Integer Seq., \textbf{12} (2009), Article 09.5.8.
\bibitem{Duj6} A. Dujella,  A. S. Janfada, J. C. Peral, S. Salami, {\it On the high rank $\pi/3$ and $2\pi/3$ - congruent number elliptic curves}, Rocky Mountain J. Math., to appear.
\bibitem{F1} S. Fermigier, {\it Exemples de courbes elliptiques de grand rang sur $\mathbb{Q}(t)$ et sur $\mathbb{Q}$ possedant des
points d'ordre 2}, C. R. Acad. Sci. Paris Ser. I Math., \textbf{322} (1996), 949-952. MR 97b:11073
\bibitem{F2} S. Fermigier,  {\it Construction of high-rank elliptic curves over $\mathbb{Q}$ and $\mathbb{Q}(t)$ with nontrivial 2-
torsion}, in Algorithmic Number Theory, Springer, Berlin, 1996.
(1996).
\bibitem{F3} S. Fermigier, {\it Une courbe elliptique definie sur $\mathbb{Q}$ de rang $\geq22$}, Acta Arith., \textbf{82} (1997),
359-363. MR 98j:11041
\bibitem{Fricke} R. Fricke, Die elliptischen Funktionen und ihre Anwendungen, Bd. 2. B. G. Teubner, Leipzig, (1922).
\bibitem{GM} E. H. Goins and D. Maddox, \emph{Heron Triangles via Elliptic Cuves}, Rocky Mountain J. Math.,
\textbf{36} (2006), no. 5, 1511-1526.
\bibitem{Gup} R. C. Gupta, \textit{Parame\'{s}vara's rule for the circumradius of a cyclic quadrilateral}, Historia Math., \textbf{4} (1977), no. 1, 67-74. 
\bibitem{Ho} A.J. Hollier, B.K. Spearman,  and Q. Yang,  {\it Elliptic curves $y^2=x^3+pqx$ with maximal rank}, Int. Math. Forum, \textbf{5} (2010), no. 23, 1105-1110.
 \bibitem{I-V} D. Ismailescu, and A. Vojdany, \textit{Class preserving dissections of convex quadrilaterals}, Forum Geom., \textbf{9} (2009), 195--211.   
\bibitem{MK} F. Izadi, F. Khoshnam, D. Moody, A. Shamsi, {\it Elliptic Curves Arising from Brahmagupta Quadrilaterals}, Bull. Aust. Math. Soc., \textbf{0} (2014), 1-10.
\bibitem{IK} F. Izadi, F. Khoshnam, K. Nabardi, \textit{Sum of two biquadrates and elliptic curves of rank
$\geq4$}, Math. J. Okayama Univ., \textbf{56} (2014), 51-63.
\bibitem{Spe3} J. A. Johnstone, and B.K. Spearman, \textit{Congruent number elliptic curves with
rank at least three}, Canad. Math. Bull., \textbf{53} (2010), no. 4, 661-666.

\bibitem{Koblitz} N. Koblitz, \textit{Introduction to Elliptic Curves and Modular Forms}, Springer-Verlag, New York, 1984.
\bibitem{Kr} T. Kretschmer, {\it Construction of elliptic curves with large rank}, Math. Comp., \textbf{46} (1986), no. 174, 627-635.
\bibitem{Kr2} T. Kretschmer, Konstruktion elliptischer Kurven von hohem Rang, Diploma thesis, Saarbruckern, 1983.
\bibitem{Ko} T. Kudo, and K. Motose, {\it On group structure of some special elliptic curves}, Math. J. Okayama Univ., \textbf{47} (2005), 81-84.
\bibitem {Ma}  M. Maenishi, {\it On the rank of elliptic curves $y^2=x^3-pqx$}, Kumamoto J. Math., \textbf{15} (2002), 1-5.

\bibitem{Mes} J. F. Mestre,  {\it Construction de courbes elliptiques sur $\mathbb{Q}$ de rang $\geq 12$}, C. R. Acad. Sci.
Paris Ser. I, \textbf{295} (1982), no. 12, 643-644.

\bibitem{Na} K.I. Nagao,  {\it An example of elliptic curve over $\mathbb{Q}$ with rank $\geq 20$}, Proc. Japan Acad. Ser. A
Math. Sci., \textbf{69} (1993), no. 8, 291-293.
\bibitem{Nag} K.I. Nagao. {\it Construction of high-rank elliptic curves with a nontrivial torsion
point}. Math. Comp., \textbf{66} (1997), no. 217, 411-415.
\bibitem {On} K. Ono, and T. Ono, {\it Quadratic form and elliptic curves III}, Proc. Japan Acad. Ser. A Math. Sci., \textbf{72} (1996), 204-205.
 \bibitem {Ono}K. Ono, {\it Euler's concordant forms}, Acta Arith., \textbf{78} (1996), no. 2, 101-123.
\bibitem{Pari}  PARI/GP, version 2.4.0, Bordeaux, 2008. Online at \url{http://pari.math.u-bordeaux.fr}.
\bibitem{Rog} N. F. Rogers, {\it Rank computations for the congruent number elliptic curves}. Exp. Math. \textbf{9}, (2000), no. 4, 591-594.
 \bibitem {Rub} K. Rubin, and A. Silverberg, {\it Rank Frequencies for Quadratic Twists of Elliptic Curves}, Exp. Math. \textbf{10} (2001), no. 4, 559-569.


\bibitem {Sil} J. H. Silverman, \textit {Advanced topics in the arithmetic of elliptic curves}, Springer-Verlag, New York, 1994.
\bibitem {Sk} T. Skolem, \textit{Diophantische Gleichungen}, Berlin, 1938.
\bibitem {Spe}  B. K. Spearman, {\it Elliptic curves $y^2=x^3-px$ of rank two}, Math. J. Okayama Univ., \textbf{49} (2007), 183-184.
\bibitem {Spe2} B. K. Spearman,  {\it On the group structure of elliptic curves $y^2=x^3-2px$}, Int. J. Algebra, \textbf{1} (2007), no. 5, 247-250.
\bibitem {Sag} W. A. Stein, et al. Sage mathematics software (version 4.7.2). The Sage Development Team, 2011. Online at \url{http://www.sagemath.org/}.

\bibitem{Pet} P. Thomas,  {\it Maximizing the area of a quadrilateral}, College Math. J., \textbf{34} (2003), no. 4, 315-316.
\bibitem{Tunnell} J. Tunnell, {\it A classical Diophantine problem and modular forms of weight 3/2}, Inventiones Math., \textbf{72} (1983), 323-334.

\bibitem{Wat} M. Watkins, S. R. Donnelly, N. D. Elkies,
T. Fisher, A. Granville, N. F. Rogers, \it{On ranks of quadratic twists of elliptic curves}, preprint (2012).  Available at \url{http://magma.maths.usyd.edu.au/~watkins/papers/RANK7.pdf}.
\end{thebibliography}

\end{document}